\DeclareMathAlphabet{\mathpzc}{OT1}{pzc}{m}{it}
\newtheorem{theorem}{Theorem}[section]
\newtheorem{maintheorem}{Main Theorem}[section]
\newtheorem{lemma}[theorem]{Lemma}
\newtheorem{proposition}[theorem]{Proposition}
\newtheorem{corollary}[theorem]{Corollary}
\newtheorem{observation}[theorem]{Observation}
\newtheorem{fact}[theorem]{Fact}
\newtheorem{claim}[theorem]{Claim}
\theoremstyle{definition}
\newtheorem{definition}[theorem]{Definition}
\theoremstyle{remark}
\newtheorem{remark}{Remark}
\newtheorem{question}{Question}
\def\hook{\upharpoonright}
\def\forces{\Vdash}
\def\barN{\bar{N}}
\def\barG{\bar{G}}
\def\ZFC{\mathsf{ZFC}}
\def\MM{\mathsf{MM}}
\def\PFA{\mathsf{PFA}}
\def\SubPFA{\mathsf{SubPFA}}
\def\MA{\mathsf{MA}}
\def\SCFA{\mathsf{SCFA}}
\def\mfc{\mathfrak{c}}
\def \mfd{\mathfrak{d}}
\def\CH {\mathsf{CH}}
\def\Q{\mathbb Q}
\def\P{\mathbb P}
\def\T{\mathbb T}
\def\R{\mathbb R}
\begin{document}

\title{Separating Subversion Forcing Axioms}

\author[Sakai]{Hiroshi Sakai}
\address[H. Sakai]{Graduate School of Mathematical Sciences, The University of Tokyo, 3-8-1 Komaba, Meguro-ku, Tokyo, 153-8914, Japan}
\email{hsakai@ms.u-tokyo.ac.jp}

\author[Switzer]{Corey Bacal Switzer}
\address[C.~B.~Switzer]{Institut f\"{u}r Mathematik, Kurt G\"odel Research Center, Universit\"{a}t Wien, Kolingasse 14-16, 1090 Wien, AUSTRIA}
\email{corey.bacal.switzer@univie.ac.at}

\thanks{\emph{Acknowledgments:} The first author would like to thank JSPS for the support through grant numbers 21K03338 and 24K06828. The second author's research was funded in whole or in part by the Austrian Science Fund (FWF) the following grants: 10.55776/Y1012, I4513, and 10.55776/ESP548}
\subjclass[2010]{03E17, 03E35, 03E50} 
\keywords{}

\date{}

\maketitle

\begin{abstract}
We study a family of variants of Jensen's
\emph{subcomplete forcing axiom}, $\SCFA$ and \emph{subproper forcing axiom}, $\SubPFA$. Using these we develop a general technique for proving non-implications of $\SCFA$, $\SubPFA$ and their relatives and give several applications. For instance we show that $\SCFA$ does not imply $\MA^+(\sigma$-closed$)$ and $\SubPFA$ does not imply Martin's Maximum.
\end{abstract}

\section{Introduction}

In this paper we study variants of subcomplete and subproper forcing classes with an eye towards investigating and distinguishing their forcing principles. Subcomplete and subproper forcing are two classes of forcing notions introduced by Jensen in \cite{JensenSPSC} in connection with the extended Namba problem, see \cite[Section 6.4]{Jen14} \footnote{See Definition \ref{scspdef} below for precise definitions.}. Both are iterable with revised countable support and generalize significantly $\sigma$-closed and proper forcing notions respectively while allowing, under some circumstances, new cofinal $\omega$-sequences of ordinals to be added to uncountably cofinal cardinals. As such each comes with a forcing axiom (consistent relative to a supercompact cardinal). The forcing axiom for subcomplete forcing in particular, dubbed $\SCFA$ by Jensen in \cite{JensenCH, Jen14} is especially interesting as it is consistent with $\diamondsuit$ while implying some of the strong, structural consequences of $\MM$, see \cite[Section 4]{Jen14}. Since their initial introduction subcomplete and subproper forcing have been tied to several applications and received further treatment, see for instance, \cite{Fuchs17, Fuchs21,FuMi17,FS2020}. 

Unfortunately, there is a fly in the ointment of the birth of the theory, initially present in \cite{JensenSPSC} in the form of a missing needed assumption of $\CH$ in the proof of Lemma 1 on pg. 18. A consequence of this error led to the (false) conclusion that the $\SCFA$ implied the failure of $\square_{\omega_1}$ when in fact a careful reading of the proof of that result shows that $\SCFA$ implies the failure of $\square_{2^{\aleph_0}}$ (hence the conclusion under $\CH$), the gap was first observed by Cox. An initial starting point for us in this work was to determine if the gap was fixable and discovered that it was not. Indeed, $\SCFA$ is consistent with $\square_{\omega_1}$.

\begin{theorem}[See Theorem \ref{addasquare}]
    Assuming the consistency of a supercompact cardinal, $\SCFA$ does not imply the failure of $\square_{\aleph_1}$ when $\CH$ fails.
\end{theorem}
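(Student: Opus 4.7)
The plan is to produce a model of $\SCFA + \neg\CH + \square_{\aleph_1}$. Since $\SCFA$ is only known to imply the failure of $\square_{2^{\aleph_0}}$ (the conclusion corrected in the introduction), arranging $2^{\aleph_0} = \aleph_2$ leaves $\square_{\aleph_1}$ unobstructed, and the construction proceeds in three phases.

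Starting from a ground model $V$ with a supercompact cardinal $\kappa$, suitably prepared (e.g.\ via a Laver-type preparation) so that $\kappa$ survives the forcings below, I would first force to obtain an intermediate model in which $2^{\aleph_0}$ is large enough that after collapsing $\kappa$ to $\omega_2$ the continuum is at least $\aleph_2$; concretely, one can add $\kappa$-many Cohen reals and verify supercompactness persists. Next, I would perform the standard revised countable support iteration $\P_\kappa$ of length $\kappa$ of subcomplete forcings, with the Laver function ensuring that every subcomplete iterand of size less than $\kappa$ is considered cofinally often; this produces a model $V[G]$ of $\SCFA$ with $\kappa = \omega_2^{V[G]}$. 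Since subcomplete forcings add no reals, the continuum remains large, so $\neg \CH$ holds in $V[G]$. Finally, over $V[G]$, I would force with the canonical $\sigma$-closed poset $\T$ whose conditions are bounded initial segments of a $\square_{\aleph_1}$-sequence; $\T$ adds no reals, preserves $\omega_1$ and $\omega_2$, and the generic produces the desired $\square_{\aleph_1}$-sequence. Note that coherence of this sequence is not affected by subcomplete-style changes of cofinality, since the definition of $\square_{\aleph_1}$ only requires order type $\leq \omega_1$, not equality with $\mathrm{cf}(\alpha)$.

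The main obstacle is showing that $\SCFA$ survives the final $\sigma$-closed forcing $\T$. The strategy is to prove a preservation theorem for $\SCFA$ under $\omega_2$-closed forcing, analogous to Larson's theorem preserving $\MM$ under $\omega_2$-directed closed forcing. Given a subcomplete forcing $\Q$ and $\aleph_1$-many dense subsets of $\Q$ in the $\T$-extension, one would use the closure of $\T$ to reflect the situation back to $V[G]$, apply $\SCFA$ there to obtain a filter meeting the corresponding dense sets, and lift the filter through $\T$ using a master condition argument. Verifying this reflection compatibly with the subtle Skolem-hull and countable transitivity conditions of Jensen's subcompleteness definition is the technical heart of the argument, and this is presumably where the paper's general separation technique enters.
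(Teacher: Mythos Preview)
Your overall outline (start with $\SCFA + 2^{\aleph_0}=\aleph_2$, then force a $\square_{\omega_1}$-sequence with the standard poset, then argue $\SCFA$ persists) matches the paper's, but the proposed preservation argument has a real gap. The forcing $\T$ to add a $\square_{\omega_1}$-sequence is \emph{not} $\omega_2$-closed; it is only $\sigma$-closed and ${<}\omega_2$-strategically closed. A Larson-style preservation theorem for $\omega_2$-directed closed forcing therefore does not apply, and the sketch ``use the closure of $\T$ to reflect the situation back to $V[G]$'' cannot be carried out: an $\omega_1$-descending chain of conditions need not have a lower bound without the strategy, and there is no master condition available for an arbitrary such chain. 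So the heart of the argument is missing, not merely technical.

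The paper's actual mechanism is quite different. It invokes Cox's preservation criterion (Theorem~\ref{Seansthm}): to show $\T$ preserves the axiom, one must find, for each $\T$-name $\dot{\Q}$ for a subcomplete forcing, a further name $\dot{\R}$ so that the three-step $\T * \dot{\Q} * \dot{\R}$ is itself subcomplete and produces a lower bound for $j``G$. Here $\dot{\R}$ is the \emph{threading} forcing for the generic square, and the crucial fact is that $\T * \dot{\R}$ has a dense $\sigma$-closed subset. The other key idea, which you did not anticipate, is to work with the parametrized class ``$\infty$-subcomplete above $\omega_2$'' (equivalent to $\infty$-subcomplete when $2^{\aleph_0}=\aleph_2$): the extra requirement $\sigma'\hook\bar\omega_2 = \sigma\hook\bar\omega_2$ guarantees that $\sigma$ and $\sigma'$ agree on the square and threading conditions (which are coded by subsets of $\omega_2$), allowing one to assemble a single witness for the three-step iteration. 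The ``above $\mu$'' parametrization, not a closure-based reflection, is what makes the argument go through.
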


This result led to a general method of separating various principles related to $\SCFA$ and this method is, in essence, the subject of the present work. See \cite{FuchsERR} for a very detailed and meticulous discussion of the error as well as its propagation in the literature and corrections.


Already in \cite{FS2020} the second author and Fuchs found (seemingly) more general classes, dubbed ``$\infty$-subcomplete" and ``$\infty$-subproper" each containing their non ``$\infty$" version respectively, and proved a variety of iteration and preservation theorems. The main theorem in that work was that the forcing axiom for $\infty$-subcomplete forcing notions, $\infty$-$\SCFA$, is compatible with a large variety of behavior on $\aleph_1$ when $\CH$ fails. For instance $\aleph_1 = \mfd < \mfc = \aleph_2$ and the existence of Souslin trees are both consistent with $\infty$-$\SCFA$ $+\neg \CH$. All of these results also hold for $\SCFA$ as well (with no $\infty$).

In this paper we combine the $\infty$-versions of these forcing classes with further parametrization ``above $\mu$" for cardinals $\mu$, initially investigated, somewhat sparingly, by Jensen in \cite[Chapter 3]{variations}. This leads to a large family of forcing axioms $\infty$-$\SubPFA \hook \mu$ and $\infty$-$\SCFA \hook \mu$, where $\infty$-$\SubPFA$ and $\infty$-$\SCFA$ coincide with $\infty$-$\SubPFA \hook 2^{\aleph_0}$ and $\infty$-$\SCFA \hook 2^{\aleph_0}$ respectively. The main outcome of this work is an investigation into how these axioms relate to one another and to other, more well known axioms such as $\MM$ and $\MA^{+}(\sigma \mbox{-closed})$. Formal definitions will be given in the second part of this introduction and Section 2 but the definitions of these axioms alongside well known results provides almost immediately that the following diagram of implications holds with $2^{\aleph_0} \leq \nu < \mu$ cardinals.

\begin{figure}[h]\label{Figure.Cichon-basic}
\centering
  \begin{tikzpicture}[scale=1.5,xscale=2]
     \draw (0,4) node (MM) {$\MM$}
           
	(0,0) node (MAsigma){$\MA^+(\sigma{\rm -closed})$}
	(1,4) node (SubPFA){$\infty$-$\SubPFA$}
	(1,3) node (SubPFAnu) {$\infty$-$\SubPFA \hook \nu$}
	(1,2) node (SubPFAmu) {$\infty$-$\SubPFA \hook \mu$}
	(1,1) node (PFA) {$\PFA$}
	(1,0) node (squarePFA) {$\forall \kappa \neg \square_\kappa$}
	(2,4) node (SCFA) {$\infty$-$\SCFA$}
	(2,3) node (SCFAnu) {$\infty$-$\SCFA \hook \nu$}
	(2,2) node (SCFAmu) {$\infty$-$\SCFA \hook \mu$}
    (3,4) node (square1) {$\forall \kappa \geq 2^{\aleph_0} \neg \square_\kappa$}
    (3,3) node (square2) {$\forall \kappa \geq \nu^{\aleph_0} \neg \square_\kappa$}
    (3,2) node (square3) {$\forall \kappa \geq \mu^{\aleph_0} \neg \square_\kappa$}
           ;
     \draw[->,>=stealth]
            (MM) edge (MAsigma)
            (MAsigma) edge (squarePFA)
            (MM) edge (SubPFA)
            (SubPFA) edge (SubPFAnu)
            (SubPFA) edge (SCFA)
            (SubPFAnu) edge (SubPFAmu)
            (SubPFAnu) edge (SCFAnu)
            (SubPFAmu) edge (PFA)
            (SubPFAmu) edge (SCFAmu)
            (PFA) edge (squarePFA)
            (SCFA) edge (SCFAnu)
            (SCFA) edge (square1)
            (SCFAnu) edge (SCFAmu)
            (SCFAnu) edge (square2)
            (SCFAmu) edge (square3)
            (square1) edge (square2)
            (square2) edge (square3)
            
            ;
  \end{tikzpicture}
 \caption{Subversion forcing axioms, $\square$ principles and their relations. An arrow means direct implication.}
 \end{figure}
The main result of this work is that essentially no arrows are missing from Figure 1 above.

\begin{maintheorem}
Let $2^{\aleph_0} \leq \nu \leq \lambda < \mu = \lambda^+$ be cardinals with $\nu^\omega < \mu$. Assuming the consistency of a supercompact cardinal, the implications given in Figure 1 are complete in the sense that if no composition of arrows exists from one axiom to another then there is a model of $\ZFC$ in which the implication fails\footnote{Except for the trivial $\forall \kappa \neg \square_\kappa \to \forall \kappa \geq 2^{\aleph_0} \neg \square_\kappa$ which did not fit aesthetically into the picture.}. \label{mainthm1}
\end{maintheorem}

As a corollary of this theorem and its proof we obtain separations of several ``subversion" forcing principles from other, more well-studied reflection principles and forcing axioms. As noted above, in particular this corrects an error in the literature by showing $\SCFA$ to be consistent with $\square_{\omega_1}$. Another sample application is the following.


\begin{corollary}
    Assuming the consistency of a supercompact cardinal, $\SCFA$ does not imply $\MA^{+}(\sigma{\mbox{\rm -closed}})$.
\end{corollary}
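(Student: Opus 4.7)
My plan is to combine Theorem~\ref{addasquare} with the standard implication $\MA^+(\sigma{\rm -closed}) \Rightarrow \forall\kappa\,\neg\square_\kappa$ displayed as the leftmost vertical arrow in Figure~\ref{Figure.Cichon-basic}. In particular, no new forcing construction is needed: the corollary falls out of the first theorem of the introduction together with one well-known implication about $\sigma$-closed forcing axioms.

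Concretely, I would proceed in two steps. First, invoke Theorem~\ref{addasquare} to obtain, from the consistency of a supercompact cardinal, a model $V$ in which $\SCFA$ holds together with $\square_{\aleph_1}$ and $\neg\CH$. This is the technical heart of the matter and is established using the general separation machinery developed in the body of the paper. Second, recall that $\MA^+(\sigma{\rm -closed})$ implies the failure of $\square_\kappa$ for every uncountable $\kappa$; the standard route, which the paper treats as background and records as an arrow in Figure~\ref{Figure.Cichon-basic}, is that $\MA^+(\sigma{\rm -closed})$ yields a strong form of stationary reflection at $\aleph_2$, whence $\neg\square_{\aleph_1}$ follows by the classical argument going back to Todorcevic. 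Combining these two observations, $V$ is a model of $\SCFA$ in which $\MA^+(\sigma{\rm -closed})$ must fail, giving the corollary.

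The main obstacle is therefore not the corollary itself but Theorem~\ref{addasquare}; once that consistency result is proved, no further work is required. This matches the way the result is advertised in the abstract, where the corollary is singled out as one of the two flagship applications of the general separation technique developed in the paper.
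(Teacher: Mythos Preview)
Your proposal is correct and follows essentially the same route as the paper: both arguments invoke Theorem~\ref{addasquare} to produce a model of $\SCFA$ (in fact $\infty$-$\SCFA$, via the cardinal arithmetic $2^{\aleph_0}=\aleph_2$ preserved by $\P_0$) in which $\square_{\omega_1}$ holds, and then appeal to the standard fact that $\MA^+(\sigma\text{-closed})$ implies $\neg\square_{\omega_1}$ (the paper cites \cite{FMS} for this). The only point you leave implicit, which the paper spells out, is that one must start from a ground model with $2^{\aleph_0}=2^{\aleph_1}=\aleph_2$ so that after forcing with $\P_0$ the equality $\infty$-$\SCFA\hook\omega_2=\infty$-$\SCFA$ holds and hence $\SCFA$ survives; but this is exactly the content of the introductory theorem you cite.
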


The rest of this paper is organized as follows. In the next subsection of this introduction we give relevant background and terminology. In the next Section we introduce the variants $\infty$-subcompleteness and $\infty$-subproperness above $\mu$ and discuss some of their properties. In Section 3 we study the forcing axioms associated to these classes and show, among other things, that they are distinct as well as the fact $\infty$-$\SCFA$ implies neither $\MA^+(\sigma{\rm -closed})$ nor $\neg \square_\kappa$ for any $\kappa < 2^\omega$. In Section 4 we continue this investigation and show that $\infty$-$\SubPFA$ does not imply $\MM$. Section 5 concludes with some final remarks and open problems. 

\subsection{Preliminaries}

We conclude this introduction with the key definitions we will use throughout, beginning with that of subproperness and subcompleteness. These are these two classes of forcing notions defined by Jensen in \cite{Jen14} which have found several applications, see e.g \cite{Switzer:dissertation, JensenSPSC, Fuchs17, FuMi17}. More discussion of these concepts can be found in \cite{Jen14} or \cite{FS2020}. Before beginning with the definition we will need one preliminary definition. Below we denote by $\ZFC^-$ the axioms of $\ZFC$ without the power set axiom\footnote{There is a subtlety here, see \cite{ZFCminusPS}. As usual, we mean by $\ZFC^-$ the theory of $\ZFC$ without the powerset axiom and the replacement scheme replaced by the collection scheme, see \cite{ZFCminusPS} for full details.}. 

\begin{definition}
A transitive set $N$ (usually a model of $\ZFC^-$) is \emph{full} if there is an ordinal $\gamma$ so that $L_\gamma(N) \models \ZFC^-$ and $N$ is regular in $L_\gamma(N)$ i.e. for all $x \in N$ and $f\in L_\gamma(N)$ if $f:x \to N$ then ${\rm ran}(f) \in N$.
\end{definition}

\begin{definition}
Let $\P$ be a forcing notion and let $\delta(\P)$ be the least size of a dense subset of $\P$.

\begin{enumerate}
\item We say that $\P$ is \emph{subcomplete} if for all sufficiently large $\theta$, $\tau > \theta$ so that $H_\theta \subseteq N := L_\tau[A] \models \ZFC^-$, $s \in N$, $\sigma: \barN \prec N$ countable, transitive and full with $\sigma(\bar{\P},\bar{s}, \bar{\theta}) = \P, s, \theta$ , if $\bar{G} \subseteq \bar{\P} \cap \barN$ is generic then there is a $p \in \P$ so that if $p \in G$ is $\P$-generic over $V$ then in $V[G]$ there is a $\sigma ':\barN \prec N$ so that 

1. $\sigma ' (\bar{\P}, \bar{s}, \bar{\theta}, \bar{\mu}) = \P, s, \theta, \mu$

2. $\sigma ' ``\barG \subseteq G$

3. ${\rm Hull}^N(\delta(\P) \cup {\rm ran}(\sigma)) = {\rm Hull}^N(\delta(\P) \cup {\rm ran}(\sigma'))$
    \item 
We say that $\P$ is \emph{subproper} if for all sufficiently large $\theta$, $\tau > \theta$ so that $H_\theta \subseteq N := L_\tau[A] \models \ZFC^-$, $s \in N$, $p\in N \cap \P$, $\sigma: \barN \prec N$ countable, transitive and full with $\sigma(\bar{p}, \bar{\P},\bar{s}, \bar{\theta}) = p, \P, s, \theta$, there is a $q \in \P$ so that $q \leq p$ and if $q\in G$ is $\P$-generic over $V$ then in $V[G]$ there is a $\sigma ':\barN \prec N$ so that 

1. $\sigma ' (\bar{p}, \bar{\P}, \bar{s}, \bar{\theta}) = p, \P, s, \theta$

2. $(\sigma ' )^{-1}`` G$ is $\bar{\P}$-generic over $\bar{N}$

3. ${\rm Hull}^N(\delta(\P) \cup {\rm ran}(\sigma)) = {\rm Hull}^N(\delta(\P) \cup {\rm ran}(\sigma'))$

\end{enumerate}
\label{scspdef}
\end{definition}

Note that the special case where $\sigma = \sigma '$ is properness (for subproperness) and (up to forcing equivalence) $\sigma$-closedness (for subcomplete). To explicate this in the later case we recall the definition of {\em completeness}, which is due to Shelah originally though we take Jensen's definition\footnote{Note that Jensen defines completeness using Boolean algebras but the definition we give below can easily be seen to be equivalent.} from \cite[p. 112]{Jen14}.

\begin{definition}
    A forcing notion $\P$ is said to be {\em complete} if for all sufficiently large $\theta$ $\P \in H_\theta$ and all countable, transitive $\sigma: \barN \prec H_\theta$ with $\sigma (\bar{\P}) = \P$, if $\barG$ is $\bar{\P}$-generic over $\barN$ then there is a $p \in \P$ forcing that $\sigma `` \barG \subseteq G$.
\end{definition}

It's clear that $\sigma$-closed forcing notions are complete. What is less clear (though equally true) is that conversely if $\P$ is complete it is forcing equivalent to a $\sigma$-closed forcing notion, a result due to Jensen, see \cite[Lemma 1.3, Chapter 3]{Jen14}. In this sense therefore subcompleteness is the ``subversion" of $\sigma$-closedness.

It was pointed out in \cite{FS2020} that the ``Hulls" condition 3) in both definitions is somewhat unnatural. It is never used in applications and appears solely for the purpose of proving the iteration theorem, \cite[Theorem 3]{Jen14}. In \cite{FS2020} Fuchs and the second author showed that by iterating with Miyamoto's \emph{nice iterations} this condition could be avoided. As such it makes sense to define the following.
\begin{definition}
Let $\P$ be a forcing notion. 

\begin{enumerate}
\item We say that $\P$ is $\infty$-\emph{subcomplete} if for all sufficiently large $\theta$, $\tau > \theta$ so that $H_\theta \subseteq N := L_\tau[A] \models \ZFC^-$, $s \in N$, $\sigma: \barN \prec N$ countable, transitive and full with $\sigma(\bar{\P},\bar{s}, \bar{\theta}) = \P, s, \theta$ , if $\barG \subseteq \bar{\P} \cap \barN$ is generic then there is a $p \in \P$ so that if $p \in G$ is $\P$-generic over $V$ then in $V[G]$ there is a $\sigma ':\barN \prec N$ so that 

1. $\sigma ' (\bar{\P}, \bar{s}, \bar{\theta}, \bar{\mu}) = \P, s, \theta, \mu$

2. $\sigma ' ``\barG \subseteq G$
    \item 
We say that $\P$ is $\infty$-\emph{subproper} if for all sufficiently large $\theta$, $\tau > \theta$ so that $H_\theta \subseteq N := L_\tau[A] \models \ZFC^-$, $s \in N$, $p\in N \cap \P$, $\sigma: \barN \prec N$ countable, transitive and full with $\sigma(\bar{p}, \bar{\P},\bar{s}, \bar{\theta}) = p, \P, s, \theta$, there is a $q \in \P$ so that $q \leq p$ and if $q\in G$ is $\P$-generic over $V$ then in $V[G]$ there is a $\sigma ':\barN \prec N$ so that 

1. $\sigma ' (\bar{p}, \bar{\P}, \bar{s}, \bar{\theta}) = p, \P, s, \theta$

2. $(\sigma ' )^{-1}`` G$ is $\bar{\P}$-generic over $\bar{N}$

\end{enumerate}
\end{definition}

To be clear this is just the same as the definitions of the ``non-$\infty$" versions, simply with the additional ``Hulls" condition removed. As mentioned these classes come with an iteration theorem.

\begin{theorem}[Theorem 3.19 (for Subcomplete) and Theorem 3.20 (for Subproper) of \cite{FS2020}]
Let $\gamma$ be an ordinal and $\langle \P_\alpha, \dot{\Q}_\alpha \; | \; \alpha < \gamma\rangle$ be a nice iteration in the sense of Miyamoto so that for all $\alpha < \gamma$ we have $\forces_{\P_\alpha}$``$\dot{\Q}_\alpha$ is $\infty$-subproper (respectively $\infty$-subcomplete). Then $\P_\gamma$ is $\infty$-subproper (respectively $\infty$-subcomplete).
\end{theorem}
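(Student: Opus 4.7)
My plan is to prove both statements simultaneously by induction on the iteration length $\gamma$, adapting Jensen's classical template (see \cite[Theorem 3]{Jen14}) but carried out in the framework of Miyamoto's nice iterations. The critical simplification gained by working with the $\infty$-versions is that the Hulls clause (3) need not be maintained through the induction; as the paper notes, that clause is never used in applications and was present in Jensen's proof only to make the RCS-fusion argument close, whereas the structure of nice iterations makes it superfluous.

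For the successor step $\gamma = \alpha + 1$ one carries out a routine two-step iteration argument. Given countable full $\sigma: \barN \prec N$ capturing the parameters for $\P_\alpha * \dot{\Q}_\alpha$, the inductive hypothesis at $\P_\alpha$ produces a condition $p_0$ and a name for a lift $\sigma_0': \barN \prec N$ extending $\sigma$ and satisfying the relevant $\infty$-subcomplete (resp.\ $\infty$-subproper) clauses restricted to the first coordinate. Working below $p_0$ in $V[G_\alpha]$, one pulls $\dot{\Q}_\alpha$ back through $\sigma_0'$ to obtain an instance of the $\infty$-subcompleteness (resp.\ $\infty$-subproperness) of $\dot{\Q}_\alpha$, yielding a further name $\dot{q}$ and an extension $\sigma'$. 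The pair $(p_0, \dot{q})$ together with the name for $\sigma'$ is the required data.

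The limit step is the main obstacle and is where nice iterations do the real work. Inside $\barN$ fix enumerations of $\bar{\gamma}$ and of the relevant dense subsets of $\bar{\P}_{\bar\gamma}$ (in the subproper case) or of the elements of $\barG$ (in the subcomplete case). Recursively construct a coherent decreasing sequence of conditions $p_\alpha$ for $\alpha \in \bar{\gamma}$ together with names $\dot{\sigma}_\alpha'$ for partial lifts, applying the inductive hypothesis at each successor stage and doing bookkeeping so that by stage $\alpha$ either the $\alpha$-th enumerated dense set has been met or the $\alpha$-th element of $\barG$ has been placed into the generic image. At each limit $\delta < \bar{\gamma}$, invoke the defining feature of a nice iteration: a coherent descending sequence of conditions with the prescribed support structure admits a canonical limit lying in $\P_\delta$, replacing the RCS-fusion step of Jensen's argument. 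Passing through $\bar{\gamma}$ yields a master condition $p \in \P_\gamma$.

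The hardest part is then to verify that in $V[G]$, with $p \in G$, the direct limit of the $\dot{\sigma}_\alpha'$'s interprets as a single well-defined $\sigma': \barN \prec N$ satisfying clauses (1) and (2) of the $\infty$-definition. Elementarity of $\sigma'$ follows by Tarski--Vaught once the bookkeeping has ensured that every element of $\barN$ was addressed at some countable stage, and clauses (1) and (2) fall out of the construction itself. Crucially, because no clause (3) needs to be threaded through the nested limits of this recursion, no fine control of the ranges of the $\sigma_\alpha'$'s is required, and the induction closes without the delicate Hull-tracking that Jensen's original proof needed.
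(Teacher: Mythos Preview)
The paper does not contain a proof of this theorem at all: it is stated as a citation of Theorems~3.19 and~3.20 of \cite{FS2020} (with the $\infty$-subproper case also attributed independently to Miyamoto \cite{Miyamoto1}), and the paper immediately moves on to use it as a black box. There is therefore no ``paper's own proof'' against which to compare your proposal.

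As for the proposal itself, your high-level outline matches the standard architecture of such iteration theorems --- induction on length, a routine two-step argument at successors, and a fusion-style construction at limits --- and you correctly identify the key point advertised in the paper: dropping the Hulls clause is precisely what allows the argument to close under nice iterations rather than RCS. That said, what you have written is a plan rather than a proof. The genuinely hard content of \cite{FS2020} lives in the limit step, and your treatment of it (``invoke the defining feature of a nice iteration'' to obtain a canonical limit, then ``the direct limit of the $\dot\sigma_\alpha'$'s interprets as a single well-defined $\sigma'$'') suppresses exactly the issues that make iteration theorems for subversion classes delicate: why the intermediate embeddings $\sigma_\alpha'$ cohere well enough to admit a limit at all (they are names living in different generic extensions and need not literally extend one another), how fullness of $\barN$ is used, and how the nested-support structure of nice iterations interacts with the bookkeeping. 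None of this is wrong as stated, but a reader could not reconstruct the proof from your sketch; for that one must go to \cite{FS2020} or \cite{Miyamoto1}.
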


We note that the above theorem in the case of $\infty$-subproper forcing was originally proved first independently by Miyamoto in \cite{Miyamoto1}. A consequence of this theorem (initially observed for the non $\infty$-versions by Jensen) is that, modulo a supercompact cardinal, these classes have a consistent forcing axiom. 

\begin{definition}
Let $\Gamma$ be a class of forcing notions. The \emph{forcing axiom for} $\Gamma$, denoted $\mathsf{FA}(\Gamma)$ is the statement that for all $\P$ in $\Gamma$ and any $\omega_1$-sequence of dense subsets of $\P$, say $\{D_i\; | \; i < \omega_1\}$ there is a filter $G \subseteq \P$ which intersects every $D_i$.

If $\Gamma$ is the class of ($\infty$-)subproper forcing notions we denote $\mathsf{FA}(\Gamma)$ by ($\infty$-)$\SubPFA$. Similarly if $\Gamma$ is the class of ($\infty$-)subcomplete forcing notions we denote $\mathsf{FA}(\Gamma)$ by ($\infty$-)$\SCFA$.
\end{definition}

It is not known whether up to forcing equivalence each class is simply equal to its ``$\infty$"-version or if their corresponding forcing axioms are equivalent. However, since the ``$\infty$" versions are more general (or appear to be) and avoid the unnecessary technicality of computing hulls, we will work with them in this paper. Nearly everything written here could be formulated for the ``non-$\infty$" versions equally well, though we leave the translation to the particularly persnickety reader.

If $\Gamma \subseteq \Delta$ then $\mathsf{FA}(\Delta)$ implies $\mathsf{FA}(\Gamma)$ so we get the following collection of implications, which are part of Figure 1. 

\begin{proposition}
$\MM \to \infty \mbox{-} \SubPFA \to \PFA$ and $\MM \to \infty \mbox{-} \SubPFA \to \infty \mbox{-} \SCFA$
\end{proposition}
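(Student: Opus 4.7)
The plan is to invoke the general observation immediately preceding the statement: whenever $\Gamma \subseteq \Delta$ are classes of forcing notions, $\mathsf{FA}(\Delta)$ implies $\mathsf{FA}(\Gamma)$. Since $\MM$, $\PFA$, $\infty \mbox{-} \SubPFA$ and $\infty \mbox{-} \SCFA$ are the forcing axioms for the classes of stationary-preserving, proper, $\infty$-subproper, and $\infty$-subcomplete forcing respectively, the two chains of implications reduce to three class inclusions: (a) proper $\subseteq$ $\infty$-subproper, (b) $\infty$-subcomplete $\subseteq$ $\infty$-subproper, and (c) $\infty$-subproper $\subseteq$ stationary-preserving.

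For (a), in the definition of $\infty$-subproperness I would set $\sigma' = \sigma$; the requirement that $(\sigma')^{-1}``G$ be $\bar\P$-generic over $\barN$ then reduces to $q$ being generic for the countable elementary submodel $\sigma``\barN$, and since $\P \in H_\theta \subseteq N$ this is supplied by properness of $\P$ applied to a countable elementary submodel of $H_\theta$ containing $\sigma``\barN \cap H_\theta$. For (b), given $p \in N \cap \P$, I absorb $p$ into the parameter $s$ so that $p = \sigma(\bar p)$ for some $\bar p \in \barN$; by countability of $\barN$ pick a filter $\barG \subseteq \bar\P \cap \barN$ generic over $\barN$ with $\bar p \in \barG$; then apply $\infty$-subcompleteness to $\barG$ to obtain $q^* \in \P$ forcing the existence of $\sigma':\barN \prec N$ with $\sigma'``\barG \subseteq G$. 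Since $\sigma'(\bar p) = p$, $q^* \Vdash p \in G$, so $q^*$ is compatible with $p$ and I take $q \leq q^*, p$. Any filter extending a $\barN$-generic filter on $\bar\P$ equals it, hence $(\sigma')^{-1}``G = \barG$ is generic over $\barN$, witnessing $\infty$-subproperness at $p$.

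For (c), given a $\P$-name $\dot C$ for a club in $\omega_1$, a stationary $S \subseteq \omega_1$, and $p \in \P$, I would choose a full, countable, transitive $\sigma:\barN \prec N$ with $\{p,\P,S,\dot C\} \subseteq {\rm ran}(\sigma)$ and $\delta := \sup(\sigma``\omega_1^{\barN}) \in S$; the set of such $\delta$ is club in $\omega_1$ so meets $S$. $\infty$-subproperness then yields $q \leq p$ forcing in $V[G]$ the existence of $\sigma'$ with $(\sigma')^{-1}``G$ generic over $\barN$. Inside $\barN[(\sigma')^{-1}``G]$, the interpretation of $(\sigma')^{-1}(\dot C)$ is a club in $\omega_1^{\barN}$ which the canonical lift of $\sigma'$ sends to an unbounded subset of $\delta^* := \sup(\sigma'``\omega_1^{\barN})$ contained in $\dot C^G$, giving $\delta^* \in \dot C^G$ by closure. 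The main obstacle is that in the $\infty$-version of Definition \ref{scspdef} no hull condition constrains $\sigma'$, so a priori $\delta^* \neq \delta$ and membership in $S$ is not automatic; my plan is to handle this by choosing $\barN$ to carry enough structure in ${\rm ran}(\sigma)$ (for instance a canonical proxy for a cofinal $\omega$-sequence in $\omega_1^{\barN}$) to force the images $\sigma``\omega_1^{\barN}$ and $\sigma'``\omega_1^{\barN}$ to share the same supremum, yielding $\delta^* = \delta \in S$ and completing the argument.
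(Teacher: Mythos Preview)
Your reductions (a) and (b) are correct and match the paper's remarks (the paper does not spell out a proof beyond the one-line observation that $\mathsf{FA}(\Delta)\to\mathsf{FA}(\Gamma)$ when $\Gamma\subseteq\Delta$, together with the parenthetical that $\infty$-subproper forcings preserve stationary subsets of $\omega_1$).

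The gap is in your plan for (c). Your diagnosis of the obstacle is exactly right: without the hull condition, nothing immediately forces $\delta^*=\delta$. But your proposed fix --- placing in ${\rm ran}(\sigma)$ ``a canonical proxy for a cofinal $\omega$-sequence in $\omega_1^{\barN}$'' --- cannot work as stated: any object in $\barN$ (or in ${\rm ran}(\sigma)$) witnessing that $\omega_1^{\barN}$ has countable cofinality would contradict $\barN\models$ ``$\omega_1^{\barN}$ is regular uncountable''. No single parameter will pin down the supremum.

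The correct argument is already present in the paper as the proof of Proposition~2.3, and it requires no extra choices at all. Because $N=L_\tau[A]$, there is an $N$-definable bijection $f$ between the reals of $N$ and the ordinals below $(2^{\aleph_0})^N$; by elementarity the same formula defines $\bar f$ in $\barN$, and $\sigma(\bar f)=\sigma'(\bar f)=f$. Since $\sigma$ and $\sigma'$ fix every real of $\barN$ pointwise, the computation $\sigma(\alpha)=f(\bar f^{-1}(\alpha))=\sigma'(\alpha)$ shows that $\sigma\hook\bar\kappa=\sigma'\hook\bar\kappa$ where $\bar\kappa=(2^{\aleph_0})^{\barN}$. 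In particular $\sigma\hook\omega_1^{\barN}=\sigma'\hook\omega_1^{\barN}$, so $\delta^*=\delta\in S$ automatically, and the rest of your argument for (c) goes through. In other words, you do not need to \emph{arrange} agreement on $\omega_1^{\barN}$; it is forced by the $L_\tau[A]$-structure of $N$ and the absoluteness of reals.
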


Here $\MM$, known as \emph{Martin's Maximum} and introduced in \cite{FMS}, is the forcing axiom for forcing notions which preserve stationary subsets of $\omega_1$ (all $\infty$-subproper forcing notions have this property) and $\PFA$ is the forcing axiom for proper forcing notions. It is known from the work of Jensen, see also \cite{FS2020} that none of the above implications can be reversed with the exception of whether $\SubPFA$ implies $\MM$. In this paper we will show the consistency of $\SubPFA + \neg \MM$, see Theorem \ref{notMM} below.

On that note we move to our last preliminary. Many of the theorems in this paper involve showing that we can preserve some fragment of $\infty$-$\SCFA$ (or $\infty$-$\SubPFA$) via a forcing killing another fragment of it. Towards this end we will need an extremely useful theorem due to Cox. Below recall that a class of forcing notions $\Gamma$ is \emph{closed under restrictions} (see Definition 39 of \cite{coxFA}) if for all $\P \in \Gamma$ and all $p \in \P$ the lower cone $\P\hook p:=\{q \in \P\; | \; q \leq p\} \in \Gamma$. One can check that both the classes of $\infty$-subcomplete and $\infty$-subproper forcing notions (as well as the restrictions ``above $\mu$" defined in Section 2) have this property.

\begin{theorem}[Cox, see Theorem 20 of \cite{coxFA}]
Let $\Gamma$ be a class of forcing notions closed under restrictions and assume $\mathsf{FA}(\Gamma)$ holds. Let $\P$ be a forcing notion. Suppose that for every $\P$-name $\dot{\Q}$ for a forcing notion in $\Gamma$ there is a $\P * \dot{\Q}$-name $\dot{\R}$ for a forcing notion so that the following hold:
\begin{enumerate}
    \item $\P * \dot{\Q} * \dot{\R}$ is in $\Gamma$,
    \item {\bf If} $j:V \to N$ is a generic elementary embedding, $\theta \geq |\P * \dot{\Q} * \dot{\R}|^+$ is regular in $V$ and
    
    a) $H_\theta^V$ is in the wellfounded part of $N$
    
    b) $j``H_\theta^V \in N$ has size $\omega_1$ in $N$
    
    c) ${\rm crit}(j) = \omega_2^V$
    
    d) There exists a $G * H * K$ in $N$ that is $(H_\theta^V, \P * \dot{\Q} * \dot{\R})$-generic
    
   \noindent {\bf Then} in $N$ the set $j``G \subseteq j(\P)$ that $j`` G$ has a lower bound in $j(\P)$ i.e. there is a $p \in j(\P) \cap N$ so that $p \leq r$ for each $r \in j``G$,
\end{enumerate}
Then $\forces_\P \mathsf{FA}(\Gamma)$ i.e. $\P$ preserves the forcing axiom for $\Gamma$. \label{Seansthm}
\end{theorem}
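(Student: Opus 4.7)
The plan is to prove the contrapositive. Suppose $G$ is $\P$-generic over $V$ and, towards a contradiction, that $\mathsf{FA}(\Gamma)$ fails in $V[G]$, witnessed by $\Q = \dot\Q_G \in \Gamma$ together with dense sets $\{(\dot D_i)_G : i < \omega_1\} \subseteq \Q$ admitting no common filter. Using closure of $\Gamma$ under restrictions we may pass below any condition in $\P$ and assume this failure is forced by the trivial condition. Apply hypothesis (1) to obtain $\dot\R$ with $\mathbb{S} := \P * \dot\Q * \dot\R \in \Gamma$, and for each $i < \omega_1$ define the dense set
\[
E_i = \bigl\{(p, \dot q, \dot r) \in \mathbb{S} \;:\; (p, \dot q) \forces \dot q \in \dot D_i\bigr\}.
\]

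The core of the argument is the standard reformulation of $\mathsf{FA}(\Gamma)$ in terms of generic elementary embeddings. Applying $\mathsf{FA}(\Gamma)$ to $\mathbb{S}$ together with the $E_i$ and further Foreman--Magidor--Shelah-style dense sets produces, in a suitable auxiliary generic extension of $V$, an embedding $j : V \to N$ satisfying conditions (a)--(c) of hypothesis (2), together with a $(H_\theta^V, \mathbb{S})$-generic $G^* * H * K \in N$ whose middle coordinate $H$ meets every $E_i$. Hypothesis (2) now supplies $p^* \in j(\P) \cap N$ below $j``G^*$, which serves as a master condition and allows a lift $\hat\jmath : V[G^*] \to N[\bar G]$ with $\bar G \ni p^*$ a $j(\P)$-generic in a further extension.

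Inside $N[\bar G]$ the filter $H \in N$ is a common refinement, in $\hat\jmath(\dot\Q_{G^*})$, of each $\hat\jmath((\dot D_i)_{G^*})$; so by elementarity of $\hat\jmath$ a filter meeting every $(\dot D_i)_{G^*}$ in $\dot\Q_{G^*}$ already exists in $V[G^*]$. This contradicts the assumption, forced by the trivial condition in $\P$, that no such filter exists in any $\P$-extension.

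The principal obstacle is the reformulation in the second paragraph: extracting, from $\mathsf{FA}(\Gamma)$, a generic elementary embedding verifying (2)(a)--(d) of the theorem's hypothesis. This is a standard but nontrivial Foreman--Magidor--Shelah-type reformulation of forcing axioms as generic embedding statements, and relies both on closure of $\Gamma$ under restrictions (so that a side collapse of $H_\theta^V$ to size $\omega_1$ can be absorbed into the forcing against which $\mathsf{FA}(\Gamma)$ is applied) and on delicate bookkeeping of the dense sets. A secondary subtlety is coordinating the internal $G^*$ with the intended $\P$-generic $G$: this is exactly the role of hypothesis (2), whose master condition $p^*$ provides the lift enabling the final reflection argument.
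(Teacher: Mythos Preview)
The paper does not prove this theorem: it is cited from Cox's paper \cite{coxFA} and used as a black box, so there is no in-paper argument to compare your sketch against. That said, your outline is the standard argument and is essentially what Cox does: apply $\mathsf{FA}(\Gamma)$ in $V$ to $\mathbb{S} = \P * \dot\Q * \dot\R \in \Gamma$, reformulate this (via the Foreman--Magidor--Shelah/Woodin translation) as the existence of a generic embedding $j:V\to N$ with properties (a)--(d), invoke hypothesis (2) for a master condition, lift, and reflect the filter $\hat\jmath``H$ back by elementarity.

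One point deserves tightening. The filter $G^*$ supplied by clause (d) is $(H_\theta^V,\P)$-generic, not $V$-generic, so the object ``$V[G^*]$'' does not literally make sense and your final sentence (``no such filter exists in any $\P$-extension'') is not quite the right formulation. The correct target of the elementarity argument is $H_\theta^V[G^*]$: one checks that $\hat\jmath: H_\theta^V[G^*] \to j(H_\theta^V)[\bar G]$ is elementary, that $\hat\jmath``H \in N[\bar G]$ generates a filter meeting each $\hat\jmath((\dot D_i)_{G^*})$, and hence that $H_\theta^V[G^*]$ sees a filter meeting each $(\dot D_i)_{G^*}$. Since the $\P$-forcing relation for statements of this complexity is computed correctly in $H_\theta^V$, some condition in $G^*$ (below the given $p$) forces the existence of the filter, which is what is needed. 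With this adjustment your sketch is correct, and you have correctly identified that the nontrivial ingredient is the generic-embedding reformulation of $\mathsf{FA}(\Gamma)$.
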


See \cite{coxFA} for more on strengthenings and generalizations of this wide ranging theorem. In particular a more general version stated in that article accounts for ``$+$-versions" of forcing axioms by carrying stationary sets through the list of assumptions. Since we won't use this here, we omit it. 

A typical application of Theorem \ref{Seansthm} is when $\P$ adds some object witnessing some ``non-reflective" behavior and $\mathbb R$ adds the non-reflective behavior to the full generic for $\P$ which allows $j``G$ to have a lower bound. For instance a classic result of Beaudoin, see \cite[Theorem 2.6]{Beaudoin} states that $\PFA$ is consistent with a non-reflecting stationary subset of $\omega_2$, i.e. a subset whose intersection with every point of uncountable cofinality below $\omega_2$ is not stationary. In this case the $\P$ would be the natural forcing to add such a non-reflecting set, and $\mathbb R$ would be the forcing to shoot a club through the compliment of the generic stationary set added by $\P$. The meat of Theorem \ref{Seansthm} is then that the forcing $\P$ preserves $\PFA$ if $\P * \dot{\Q} * \dot{\mathbb R}$ is proper for any proper $\dot{\mathbb Q} \in V^\P$ (which it is). A variation of this argument is made as part of Theorem \ref{notMM}, see Section 4 for details.

\section{$\infty$-Subcompleteness and $\infty$-Subproperness above $\mu$}
Most theorems in this paper filter through the notions of $\infty$-\emph{subcompleteness (respectively $\infty$-subproperness) above} $\mu$ for a cardinal $\mu$. These are technical strengthenings of $\infty$-subcompleteness (respective $\infty$-subproperness). In this section we define these strengthenings as well as make some elementary observations which will be used in rest of the paper.

\begin{definition}
Let $\mu$ be a cardinal and $\P$ a forcing notion. 
\begin{enumerate}\item
We say that $\P$ is $\infty$-\emph{subcomplete above} $\mu$ if for all sufficiently large $\theta$, $\tau > \theta$ so that $H_\theta \subseteq N := L_\tau[A] \models \ZFC^-$, $s \in N$, $\sigma: \barN \prec N$ countable, transitive and full with $\sigma(\bar{\P},\bar{s}, \bar{\theta}, \bar{\mu}) = \P, s, \theta, \mu$, if $\barG \subseteq \bar{\P} \cap \barN$ is generic then there is a $p \in \P$ so that if $p \in G$ is $\P$-generic over $V$ then in $V[G]$ there is a $\sigma ':\barN \prec N$ so that 

1. $\sigma ' (\bar{\P}, \bar{s}, \bar{\theta}, \bar{\mu}) = \P, s, \theta, \mu$

2. $\sigma ' ``\barG \subseteq G$

3. $\sigma ' \hook \bar{\mu} = \sigma \hook \bar{\mu}$

\item
We say that $\P$ is $\infty$-\emph{subproper above} $\mu$ if for all sufficiently large $\theta$, $\tau > \theta$ so that $H_\theta \subseteq N := L_\tau[A] \models \ZFC^-$, $s \in N$, $p\in N \cap \P$, $\sigma: \barN \prec N$ countable, transitive and full with $\sigma(\bar{p}, \bar{\P},\bar{s}, \bar{\theta}, \bar{\mu}) = p, \P, s, \theta, \mu$, there is a $q \in \P$ so that $q \leq p$ and if $q\in G$ is $\P$-generic over $V$ then in $V[G]$ there is a $\sigma ':\barN \prec N$ so that 

1. $\sigma ' (\bar{p}, \bar{\P}, \bar{s}, \bar{\theta}, \bar{\mu}) = p, \P, s, \theta, \mu$

2. $(\sigma ' )^{-1}`` G$ is $\bar{\P}$-generic over $\bar{N}$

3. $\sigma ' \hook \bar{\mu} = \sigma \hook \bar{\mu}$

\end{enumerate}
\end{definition}

Concretely being $\infty$-subcomplete above $\mu$ simply means that $\P$ is $\infty$-subcomplete and, moreover, for any $\sigma: \barN \prec N$ the corresponding $\sigma '$ (in $V[G]$) witnessing the $\infty$-subcompleteness can be arranged to agree with $\sigma$ ``up to $\mu$" i.e. on the ordinals below $\sigma^{-1}``\mu$ (and idem for $\infty$-subproperness). The ``non-$\infty$" versions of these classes were first introduced by Jensen in \cite{JensenSPSC} and were investigated further by Fuchs in \cite{Fuchstree} who made several of the elementary observations we repeat below. The terminology ``above $\mu$" was used by Fuchs as well as in \cite[Chapter 2]{JensenSPSC} while in other places, e.g. \cite{variations} Jensen uses the terminology ``$\mu$-subcomplete". Following the first convention, we have moved the parameter $\mu$ to the end to avoid the awkwardness of ``$\mu$-$\infty$-subcomplete/$\mu$-$\infty$-subproper". The following is immediate from the definitions.

\begin{observation}
Let $\mu < \nu$ be cardinals. If $\P$ is $\infty$-subcomplete (respectively $\infty$-subproper) above $\nu$ then it is $\infty$-subcomplete (respectively subproper) above $\mu$ and it is $\infty$-subcomplete (respectively $\infty$-subproper) (without any restriction). \label{observation1}
\end{observation}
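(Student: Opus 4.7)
The plan is to prove both halves of the observation (``above $\mu$'' and plain) via a single bootstrap argument, uniformly for the subcomplete and subproper cases. The guiding remark is that once a witnessing embedding $\pi'$ satisfies $\pi'\hook\bar{\nu}^{*}=\pi\hook\bar{\nu}^{*}$ for $\bar{\nu}^{*}:=\pi^{-1}(\nu)$, agreement below any ordinal $<\bar{\nu}^{*}$ (and in particular below $\bar{\mu}^{*}:=\pi^{-1}(\mu)$) is automatic. So the only real content is to reduce to an embedding to which the above-$\nu$ hypothesis applies, i.e., one whose range contains~$\nu$.

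Given $\sigma:\barN\prec N$ countable, transitive, full with $\sigma(\bar\P,\bar s,\bar\theta,\bar\mu)=(\P,s,\theta,\mu)$ (drop $\bar\mu,\mu$ for the plain conclusion) and $\barG\subseteq\bar\P\cap\barN$ generic, I would (i) form $H:=\mathrm{Sk}^N(\mathrm{ran}(\sigma)\cup\{\nu\})$ and its inverse Mostowski collapse $\pi:\bar{M}\to H$, which (after choosing $\tau$ large enough) yields a countable, transitive and full $\pi:\bar{M}\prec N$ with $\pi^{-1}(\nu)=\bar{\nu}^{*}$; (ii) set $\iota:=\pi^{-1}\circ\sigma:\barN\to\bar{M}$, which is elementary and satisfies $\pi\circ\iota=\sigma$, and put $\bar\P^{*}:=\iota(\bar\P)$, $\bar s^{*}:=\iota(\bar s)$, $\bar\theta^{*}:=\iota(\bar\theta)$, $\bar{\mu}^{*}:=\iota(\bar\mu)$; (iii) extend the filter in $\bar\P^{*}$ generated by $\iota``\barG$ (strongly centered since $\iota$ is elementary and $\barG$ is a filter) to some $\bar{M}$-generic $\bar{G}^{*}$ by standard diagonalization against the countably many dense subsets of $\bar\P^{*}$ in $\bar{M}$; (iv) apply the hypothesis of $\infty$-subcompleteness above $\nu$ to $\pi,\bar{G}^{*}$ to obtain $p\in\P$ forcing the existence, in $V[G]$, of $\pi':\bar{M}\prec N$ with $\pi'(\bar\P^{*},\bar s^{*},\bar\theta^{*},\bar{\nu}^{*})=(\P,s,\theta,\nu)$, $\pi'``\bar{G}^{*}\subseteq G$, and $\pi'\hook\bar{\nu}^{*}=\pi\hook\bar{\nu}^{*}$; (v) take $\sigma':=\pi'\circ\iota$.

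Verifying that $\sigma'$ is a witness is then routine: elementarity is immediate, $\sigma'(\bar\P,\bar s,\bar\theta,\bar\mu)=(\P,s,\theta,\mu)$ holds because the $\iota$-images of these parameters all lie at or below $\bar{\nu}^{*}$ where $\pi'$ and $\pi$ agree, and $\sigma'``\barG=\pi'``(\iota``\barG)\subseteq\pi'``\bar{G}^{*}\subseteq G$. For the above-$\mu$ agreement, every $\alpha<\bar\mu$ has $\iota(\alpha)<\bar{\mu}^{*}<\bar{\nu}^{*}$, so $\sigma'(\alpha)=\pi'(\iota(\alpha))=\pi(\iota(\alpha))=\sigma(\alpha)$, i.e.\ $\sigma'\hook\bar\mu=\sigma\hook\bar\mu$. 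The $\infty$-subproper case follows the same template, simply tracking a condition $\bar p\in\bar\P\cap\barN$ with image $\bar p^{*}:=\iota(\bar p)$ and producing $q\le p$ instead of $p$. The main point deserving care is step (iii), the extension of $\iota``\barG$ to a $\bar{M}$-generic filter; this uses in a crucial way that $\iota``\barG$ is the image of a filter under an elementary map between countable structures, so the construction is genuinely diagonal rather than a direct absorption.
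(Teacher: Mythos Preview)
The paper offers no argument for this observation, calling it ``immediate from the definitions''. Your treatment is much more elaborate, and the concern motivating it is legitimate under a literal reading of the definitions: the embedding $\sigma$ one is handed when verifying $\infty$-subcompleteness (or $\infty$-subproperness) above $\mu$ need not have $\nu$ in its range, so the above-$\nu$ hypothesis is not directly applicable to that same $\sigma$. Enlarging the hull to absorb $\nu$ is the natural move.

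However, your step (iii) is a genuine gap. The claim that the filter generated by $\iota``\barG$ in $\bar\P^{*}$ extends to a $\bar M$-generic filter by ``standard diagonalization'' does not go through: at stage $n$ of such a construction one has a condition $p_{n-1}$ lying below $\iota(g_0),\ldots,\iota(g_{n-1})$ and inside $D_{n-1}$, but there is no reason $p_{n-1}$ should be compatible with $\iota(g_n)$ --- $p_{n-1}$ is merely \emph{some} extension meeting $D_{n-1}$, and the directed system $\iota``\barG$ typically has no lower bound in $\bar\P^{*}$ to anchor the construction. (Elementarity of $\iota$ guarantees that $\iota``\barG$ is directed, not that it is absorbable into a larger generic.) The mirror-image problem reappears in your subproper sketch: from the $\bar M$-genericity of $\bar G^{*}:=(\pi')^{-1}``G$ you need $\iota^{-1}[\bar G^{*}]$ to be $\barN$-generic, but $\bar G^{*}$ is only guaranteed to meet the sets $\iota(D)\in\bar M$ for dense $D\in\barN$, and an element of $\iota(D)$ need not lie in $\iota``D\subseteq\mathrm{ran}(\iota)$. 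In both directions, then, the passage between $\barN$-generics and $\bar M$-generics across $\iota$ is not as free as your outline assumes; a correct argument has to organise the relationship between $\barN$, $\barG$ and the enlarged model more carefully (for instance by folding them into the parameter $s$ so that the witnessing $\pi'$ is forced to respect them), rather than relying on a bare diagonalization.
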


It is easy to see that being $\infty$-subcomplete (respectively, $\infty$-subproper) is equivalent to being $\infty$-subcomplete (respectively $\infty$-subproper) above $\omega_1$, however more is true, an observation due independently to the first author and Fuchs (see \cite[Observation 4.2]{Fuchstree}, note also \cite[Observation 4.7]{Fuchstree} which is relevant here).

\begin{proposition}
Let $\P$ be a forcing notion. $\P$ is $\infty$-subcomplete (respectively $\infty$-subproper) if and only if $\P$ is $\infty$-subcomplete above $2^{\aleph_0}$ (respectively $\infty$-subproper above $2^{\aleph_0}$).
\end{proposition}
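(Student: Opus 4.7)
The $(\Leftarrow)$ direction is immediate from Observation \ref{observation1}, so the substantive content lies in $(\Rightarrow)$. My plan is a standard coding trick: enlarge the parameter $s$ by a single well-chosen bijection so that any $\sigma'$ produced by $\infty$-subcompleteness of $\P$ is \emph{forced} to agree with $\sigma$ pointwise below $\bar\mu$, by absoluteness of subsets of $\omega$.

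Set $\mu := 2^{\aleph_0}$ and assume $\P$ is $\infty$-subcomplete. Given data $\theta, \tau, N, s, \sigma: \barN \prec N, \barG$ as in the definition of ``$\infty$-subcomplete above $\mu$'' (so in particular $\sigma(\bar\mu) = \mu$), I would choose $\theta$ large enough that some bijection $b: \mu \to \mathcal{P}(\omega)$ lies in $H_\theta \subseteq N$, and apply $\infty$-subcompleteness of $\P$ to the enlarged parameter $s^\star := \langle s, \mu, b\rangle$ in place of $s$. Setting $\bar b := \sigma^{-1}(b)$, this yields $p \in \P$ such that, whenever $p \in G$, there is in $V[G]$ a $\sigma': \barN \prec N$ with $\sigma'(\bar b) = b$, $\sigma'(\bar\mu) = \mu$, and $\sigma' \, ``\, \barG \subseteq G$. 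What remains is to verify $\sigma' \hook \bar\mu = \sigma \hook \bar\mu$, which is clause (3) of $\infty$-subcompleteness above $\mu$.

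The verification is short and is where the only real content lives. Fix $\bar\alpha < \bar\mu$. Then $\bar b(\bar\alpha) \in \mathcal{P}(\omega)^{\barN}$ is a genuine subset of $\omega$, hence pointwise fixed by any elementary embedding between transitive sets containing it (elementarity and $\sigma(n) = n$ for $n \in \omega$ give $n \in \bar b(\bar\alpha) \leftrightarrow n \in \sigma(\bar b(\bar\alpha))$, and likewise for $\sigma'$). Therefore
\[ b(\sigma(\bar\alpha)) \;=\; \sigma(\bar b(\bar\alpha)) \;=\; \bar b(\bar\alpha) \;=\; \sigma'(\bar b(\bar\alpha)) \;=\; b(\sigma'(\bar\alpha)), \]
and injectivity of $b$ delivers $\sigma(\bar\alpha) = \sigma'(\bar\alpha)$, as needed.

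The $\infty$-subproper case is handled by the identical enrichment of $s$ to $\langle s, \mu, b\rangle$: invoking $\infty$-subproperness of $\P$ produces $q \leq p$ and, for $q \in G$, a $\sigma'$ with $(\sigma')^{-1}\, ``\, G$ generic over $\barN$, and the computation above again forces $\sigma' \hook \bar\mu = \sigma \hook \bar\mu$. I do not anticipate any genuine obstacle; the entire argument rests on $|\mathcal{P}(\omega)| = \mu$, which lets one bijection in $N$ encode all of $\sigma \hook \bar\mu$ absolutely between $\sigma$ and $\sigma'$.
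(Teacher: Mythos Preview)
Your argument is correct and is essentially the paper's proof: both rest on the fact that subsets of $\omega$ are pointwise fixed by any elementary embedding between transitive models, so a bijection between $2^{\aleph_0}$ and $\mathcal{P}(\omega)$ transfers this to agreement of $\sigma$ and $\sigma'$ on ordinals below $\bar\mu$. The only difference in execution is that the paper exploits the definable well-order of $N = L_\tau[A]$ to obtain a \emph{canonical} bijection, automatically preserved by both $\sigma$ and $\sigma'$ without enlarging $s$, whereas you add an arbitrary bijection $b$ to the parameter; one small precision to watch in your version is that writing $\bar b = \sigma^{-1}(b)$ requires $b \in \operatorname{ran}(\sigma)$, which is most cleanly arranged by first choosing $\bar b \in \barN$ via elementarity and setting $b := \sigma(\bar b)$.
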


As noted above, this proposition (in the case of subcompleteness) is proved as Observation 4.2 of \cite{Fuchstree} but we give a detailed proof in order to help the reader get accustomed to $\infty$-subversion forcing as well as to include the mild difference of subproperness. However, let us note that essentially the point is that, using the definable well order in $L_\tau[A]$, the reals of $\bar{N}$ code the cardinality of the continuum. 

\begin{proof}
We prove the case of $\infty$-subproperness and leave the reader to check the case of $\infty$-subcompleteness since the latter, in its non ``$\infty$-version" can already be found in the literature. Let $\P$ be a forcing notion. It is immediate as noted above that if $\P$ is $\infty$-subproper above $2^\omega$ then it is $\infty$-subproper so we need to check just the reverse direction. Thus assume that $\P$ is $\infty$-subproper and let $\tau > \theta$ be cardinals so that $\sigma: \barN \prec N := L_\tau[A]$ with $H_\theta \subseteq N$ be as in the definition of $\infty$-subproperness. Finally let $p \in \P$ force that there is a $\sigma ' :\barN \prec N$ so that $\sigma ' (\bar{\P}) = \P$ and $\sigma '{}^{-1}G := \barG$ is $\bar{\P}$-generic over $\barN$ for any generic $G \ni p$ (the existence of such a condition is the heart of the definition of $\infty$-subproperness of course). We need to show that $p$ forces that $\sigma ' \hook 2^{\aleph_0} = \sigma \hook 2^{\aleph_0}$, where, to be clear, $2^{\aleph_0}$ denotes the cardinal (as computed in $\bar{N}$) which bijects onto the continuum (as defined in $\bar{N}$). To avoid confusion let us denote the cardinal $2^{\aleph_0} = \kappa$ (in $V$ and hence $N$) and the preimage of $\kappa$ in $\bar{N}$ under $\sigma$ as $\bar{\kappa}$. 

Fix a $G \ni p$ generic and work in $V[G]$ with $\sigma '$ etc as described in the previous paragraph. First note that by the absoluteness of $\omega$ we have that for all reals $x \in \bar{N}$ it must be the case that $\sigma (x) = \sigma' (x) = x$ (and being a real is absolute between $\bar{N}$ and $V$/$V[G]$). Moreover, since $N = L_\tau[A]$ there is a definable well order of the universe, and in particular there is a definable bijection of the reals onto $\kappa$, say $f:2^\omega \to \kappa$. By elementarity in $\bar{N}$ there is a definable bijection $\bar{f}:2^\omega \cap \bar{N} \to \bar{\kappa}$. But since $f$ is definable we have $\sigma(\bar{f}) = \sigma ' (\bar{f}) = f$ and hence for all $\alpha \in \bar{\kappa}$ we get $\sigma (\alpha) = \sigma (\bar{f}(\bar{f}^{-1}(\alpha)))= \sigma (\bar{f})(\sigma (\bar{f}^{-1}(\alpha))) = \sigma ' (\bar{f}(\bar{f}^{-1}(\alpha))) = \sigma ' (\alpha)$, as needed. Since the only assumption on $G$ was that $p \in G$ we have, back in $V$ that $p$ forces this situation which completes the proof. 
\end{proof}

Jensen showed that Namba forcing is $\infty$-subcomplete above $\omega_1$ assuming $\CH$ while it is not even $\infty$-subproper above $\omega_2$ in $\ZFC$, a consequence of the next observation, which essentially appears in \cite[Theorem 2.12]{Minden}.
\begin{lemma}
Let $\mu$ be a cardinal. 
\begin{enumerate}
    \item If $\P$ is $\infty$-subproper above $\mu$ then any new countable set of ordinals less than $\mu$ added by $\P$ is covered by an old countable set of ordinals (less than $\mu$). In particular if $\forces_\P$``${\rm cf}(\mu) = \omega$" then ${\rm cf}(\mu) = \omega$ (in $V$).
    
    \item If $\P$ is $\infty$-subcomplete above $\mu$ then $\P$ adds no new countable sets of ordinals below $\mu$.
\end{enumerate}
 \label{newseqs}
\end{lemma}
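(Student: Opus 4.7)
The proofs of the two parts are parallel: in each case I would fix a $\P$-name $\dot x$ for a countable set of ordinals below $\mu$, set up $\sigma : \bar N \prec N$ as in the relevant definition with $\dot x, \P, \mu$ (and a target condition $p_0$) coded into the parameters, and then exploit the extra clause $\sigma' \hook \bar\mu = \sigma \hook \bar\mu$ available ``above $\mu$'' to identify the evaluation $\dot x^G$ with the ground-model object $\sigma `` \bar{\dot x}^{\bar G}$. The key point is that $\bar{\dot x}^{\bar G}$ is a set of ordinals below $\bar\mu$, so $\sigma$ and $\sigma'$ act identically on its elements, which lets me replace the (a priori $V[G]$-internal) embedding $\sigma'$ by the ground-model $\sigma$ wherever it matters.

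For part (1), fix $p_0 \in \P$ forcing that $\dot x$ is a countable subset of $\mu$ and include $p_0$ in the parameters of $\sigma$. By $\infty$-subproperness above $\mu$ there is $q \leq p_0$ such that for any $\P$-generic $G \ni q$, in $V[G]$ one has $\sigma' : \bar N \prec N$ with $\bar G := (\sigma')^{-1}``G$ being $\bar\P$-generic over $\bar N$ and $\sigma' \hook \bar\mu = \sigma \hook \bar\mu$. The standard lift $\tilde\sigma' : \bar N[\bar G] \to N[G]$ given by $\bar\tau^{\bar G} \mapsto \sigma'(\bar\tau)^G$ is elementary and sends $\bar{\dot x}^{\bar G}$ to $\dot x^G$, whence
\[ \dot x^G \; = \; \sigma' `` \bar{\dot x}^{\bar G} \; \subseteq \; \sigma' `` \bar\mu \; = \; \sigma `` \bar\mu, \]
a countable ground-model subset of $\mu$ covering $\dot x^G$. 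The ``In particular'' clause is then immediate by contraposition: if $\dot x^G$ is cofinal in $\mu$, so is the covering set $\sigma `` \bar\mu \in V$, hence ${\rm cf}(\mu) = \omega$ in $V$.

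For part (2) the roles of generic filter and condition are reversed. Given $p_0 \in \P$, include $p_0$ in the parameters of $\sigma$, and then use $|\bar N| = \aleph_0$ to pick, already in $V$, a $\bar\P$-generic $\bar G$ over $\bar N$ with $\bar{p_0} \in \bar G$; set $y := \sigma `` \bar{\dot x}^{\bar G} \in V$. Applying $\infty$-subcompleteness above $\mu$ to this $\bar G$ yields $p \in \P$ such that any $\P$-generic $G \ni p$ admits $\sigma' : \bar N \prec N$ in $V[G]$ with $\sigma' `` \bar G \subseteq G$ and $\sigma' \hook \bar\mu = \sigma \hook \bar\mu$; in particular $p_0 = \sigma'(\bar{p_0}) \in G$, so $p$ and $p_0$ are compatible. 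Using the lift $\bar\tau^{\bar G} \mapsto \sigma'(\bar\tau)^G$ again, the same computation as in (1) gives $\dot x^G = \sigma `` \bar{\dot x}^{\bar G} = y$, so any common extension $q \leq p, p_0$ forces $\dot x = \check y \in V$; since $p_0$ was arbitrary, $\dot x$ is forced into $V$.

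The only real subtlety, and the main obstacle, is verifying legitimacy of the lift in part (2). In the subproper case $(\sigma')^{-1}``G$ is explicitly $\bar\P$-generic over $\bar N$, so one simply uses $\bar N[(\sigma')^{-1}``G]$ as the source. In the subcomplete case $(\sigma')^{-1}``G$ need not even be a filter; instead one must extend along the ground-model generic $\bar G$ itself, relying on the fact that $\sigma' `` \bar G \subseteq G$ is exactly what is needed for the forcing theorem in $\bar N$ together with elementarity of $\sigma'$ to transfer forcing statements along $\bar G$ to their $\sigma'$-images in $G$ in both directions, yielding a well-defined elementary map $\bar N[\bar G] \to N[G]$.
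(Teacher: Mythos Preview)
Your argument is correct and is precisely the standard one the paper has in mind: the paper's own proof is merely a sketch that refers the reader to the analogous facts for proper and $\sigma$-closed forcing and to \cite[Theorem 2.12]{Minden}, noting only that the extra clause $\sigma'\hook\bar\mu=\sigma\hook\bar\mu$ is what localizes the conclusion below $\mu$. You have spelled out exactly this, including the lifting $\barN[\barG]\to N[G]$ and the use of countability of $\bar{\dot x}^{\barG}$ to turn $\tilde\sigma'(\bar{\dot x}^{\barG})$ into the pointwise image $\sigma'``\bar{\dot x}^{\barG}$, which then equals $\sigma``\bar{\dot x}^{\barG}$ by the agreement below $\bar\mu$.
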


\begin{proof}
The proofs of both are similar to the corresponding proofs that every new countable set of ordinals added by a proper forcing notion is contained in an old countable set of ordinals and $\sigma$-closed forcing notions do not add new countable sets of ordinals at all respectively. The point is that to show the corresponding fact ``below $\mu$" one only needs $\infty$-subproperness (respectively $\infty$-subcompleteness) above $\mu$, see \cite[Theorem 2.12]{Minden} for details.

 \end{proof}

As mentioned before Lemma \ref{newseqs} an immediate consequence is the following.
\begin{lemma}
Namba forcing is not $\infty$-subproper above $\omega_2$. In particular Namba forcing is not $\infty$-subproper if $\CH$ fails. \label{Nambalemma}
\end{lemma}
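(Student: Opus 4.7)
The plan is to derive both parts of the lemma directly from Lemma \ref{newseqs}, exploiting the defining feature of Namba forcing: it adds a cofinal $\omega$-sequence to $\omega_2$, hence changes the cofinality of $\omega_2$ to $\omega$ in the extension while preserving $\omega_1$.

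For the first assertion, I would apply Lemma \ref{newseqs}(1) with $\mu = \omega_2$. In $V$ the cardinal $\omega_2$ is regular, so $\mathrm{cf}(\omega_2) = \omega_2 > \omega$. However, any generic filter for Namba forcing produces a cofinal $\omega$-sequence in $\omega_2$, so Namba forcing forces ``$\mathrm{cf}(\omega_2) = \omega$''. If Namba forcing were $\infty$-subproper above $\omega_2$, then the ``in particular'' clause of Lemma \ref{newseqs}(1) would transfer this back to $V$, giving $\mathrm{cf}(\omega_2) = \omega$ in $V$, a contradiction. Thus Namba forcing is not $\infty$-subproper above $\omega_2$.

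For the second assertion, assume $\neg \CH$, so $2^{\aleph_0} \geq \omega_2$. Suppose, toward a contradiction, that Namba forcing is $\infty$-subproper. By the preceding Proposition (the equivalence between $\infty$-subproperness and $\infty$-subproperness above $2^{\aleph_0}$), Namba forcing is $\infty$-subproper above $2^{\aleph_0}$. Since $\omega_2 \leq 2^{\aleph_0}$, Observation \ref{observation1} (in the case $\omega_2 < 2^{\aleph_0}$; the case $\omega_2 = 2^{\aleph_0}$ is immediate) implies that Namba forcing is $\infty$-subproper above $\omega_2$, contradicting the first assertion.

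There is no real obstacle here: the proof is a one-line application of the cofinality preservation property established in Lemma \ref{newseqs}, combined with the characterization of $\infty$-subproperness as $\infty$-subproperness above the continuum. The only thing to be careful about is that the ``above $\mu$'' condition is precisely tuned so that a change of cofinality at $\mu$ in the extension is transmitted back to $V$, which is exactly the obstruction Namba forcing violates at $\omega_2$ whenever $\omega_2 \leq 2^{\aleph_0}$.
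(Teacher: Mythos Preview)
Your proposal is correct and matches the paper's approach exactly: the paper states the lemma as ``an immediate consequence'' of Lemma \ref{newseqs} without writing out a proof, and your argument is precisely the intended derivation, combining Lemma \ref{newseqs}(1), the characterization of $\infty$-subproperness as $\infty$-subproperness above $2^{\aleph_0}$, and Observation \ref{observation1}.
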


We do not know whether this lifts to the forcing axiom level. In other words the following is open though seems unlikely given Lemma \ref{Nambalemma}.

\begin{question}
    Does $\SCFA$ imply the forcing axiom for Namba forcing when $\CH$ fails?
\end{question}

Finally we end this section with some observations about the associated forcing axioms for the classes we have been discussing.

\begin{definition}
Let $\mu$ be a cardinal. Denote by $\infty$-$\SubPFA \hook \mu$ the forcing axiom for forcing notions $\P$ which are $\infty$-subproper above $\mu$ and $\infty$-$\SCFA \hook \mu$ the same for $\P$ which are $\infty$-subcomplete above $\mu$.
\end{definition}

The following is immediate by Observation \ref{observation1}.

\begin{proposition}
Let $\mu < \nu$ be cardinals. We have that $\infty$-$\SCFA$ implies $\infty$-$\SCFA \hook \mu$ implies $\infty$-$\SCFA \hook \nu$. Similarly for the variants of $\infty$-$\SubPFA$. 
\end{proposition}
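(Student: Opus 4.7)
The approach is just to chase inclusions of forcing classes and invoke the fact that the forcing axiom $\mathsf{FA}(\Gamma)$ is monotone decreasing in $\Gamma$: if $\Gamma \subseteq \Delta$ then any $\omega_1$-family of dense subsets in some $\P \in \Gamma$ is automatically an $\omega_1$-family in $\P$ viewed as an element of $\Delta$, so a filter meeting them exists by $\mathsf{FA}(\Delta)$. Thus the whole content of the proposition reduces to showing the appropriate inclusions between the underlying forcing classes.

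First I would handle the chain for $\infty$-$\SCFA$. By Observation \ref{observation1}, every forcing notion that is $\infty$-subcomplete above $\mu$ is $\infty$-subcomplete, so the class of $\infty$-subcomplete forcings contains the class of $\infty$-subcomplete-above-$\mu$ forcings, giving $\infty$-$\SCFA \Rightarrow \infty$-$\SCFA \hook \mu$ by the general monotonicity. Similarly, since $\mu < \nu$, Observation \ref{observation1} tells us that any $\P$ which is $\infty$-subcomplete above $\nu$ is also $\infty$-subcomplete above $\mu$, so the class of $\P$'s that are $\infty$-subcomplete above $\mu$ contains the class of $\P$'s that are $\infty$-subcomplete above $\nu$, giving $\infty$-$\SCFA \hook \mu \Rightarrow \infty$-$\SCFA \hook \nu$. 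The $\infty$-$\SubPFA$ case is word-for-word the same, using the parenthetical ``$\infty$-subproper'' clause of Observation \ref{observation1}. There is no genuine obstacle here; the only care needed is to track the direction of implication, since increasing the parameter $\mu$ restricts the class and hence weakens the axiom.
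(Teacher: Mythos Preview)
Your proof is correct and matches the paper's approach exactly: the paper simply states that the proposition is immediate from Observation~\ref{observation1}, and you have spelled out precisely that reasoning, together with the monotonicity of $\mathsf{FA}(\cdot)$ under class inclusion.
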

In the next section we will show that (in many cases) the reverse implications do not hold. Before doing this let us note the following which was essentially known but requires piecing together from several places in the literature (and sifting through errors given by the initial mistake detailed above).

\begin{theorem}[Essentially Jensen, \cite{JensenCH}]
    Let $2^{\aleph_0} \leq \nu \leq \kappa < \mu = \kappa^+$ be cardinals with $\nu^\omega < \mu$. The forcing axiom $\infty$-$\SCFA \hook \nu$ implies the failure of $\square_\kappa$ and even that there is no non-reflecting stationary subset of $\kappa^+ \cap {\rm cof}(\omega)$. \label{failureofsquare}
\end{theorem}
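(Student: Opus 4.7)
The plan is to prove the stronger statement—that under $\infty\text{-}\SCFA \hook \nu$, no stationary $S \subseteq \kappa^+ \cap \cof(\omega)$ can be non-reflecting—since the standard derivation of a non-reflecting stationary set from a $\square_\kappa$-sequence $\vec C$ (e.g. $S = \{\alpha \in \kappa^+ \cap \cof(\omega) : \text{ot}(C_\alpha) = \omega\}$) yields $\neg \square_\kappa$ as a consequence. For contradiction, fix such a non-reflecting stationary $S$.

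I would then introduce the Namba-style forcing $\P_S$ whose conditions are continuous, strictly increasing sequences $s : \xi+1 \to \kappa^+$ of countable successor length satisfying $s(\eta) \in S$ for every limit $\eta \leq \xi$, ordered by end-extension. Granting for the moment that $\P_S$ is $\infty$-subcomplete above $\nu$, apply $\infty\text{-}\SCFA \hook \nu$ to the $\omega_1$ many dense sets which force length past each countable $\xi$ and top value past each $\gamma < \kappa^+$ (dense because $S$ is stationary, hence unbounded, in $\kappa^+$). The resulting filter assembles in $V$ a continuous, strictly increasing $\vec\alpha : \omega_1 \to \kappa^+$ with $\alpha_\eta \in S$ whenever $\eta$ is a limit; letting $\beta = \sup \vec\alpha$, the set $\{\alpha_\eta : \eta \in \omega_1, \eta \text{ a limit}\}$ is a club of $\beta$ contained in $S$, so $\cof^V(\beta) = \omega_1$ and $S \cap \beta$ is stationary in $\beta$, contradicting non-reflection.

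The crux of the proof is the verification that $\P_S$ is $\infty$-subcomplete above $\nu$. Fix a countable, full $\sigma : \barN \prec N = L_\tau[A]$ with $\sigma(\bar\P, \bar S, \bar\kappa, \bar\theta, \bar\mu) = \P_S, S, \kappa, \theta, \mu$ and a generic $\barG \subseteq \bar\P \cap \barN$ with union $\bar{\vec\alpha}$. Because $\sigma$ preserves cofinality $\omega$ (by picking a cofinal $\omega$-sequence inside $\barN$ and transferring), the composite $\sigma \circ \bar{\vec\alpha}$ is continuous at $\barN$-limits of $\barN$-cofinality $\omega$, with suprema automatically in $S$ since $\bar{\vec\alpha}$ hits $\bar S$ at such points; but it has gaps above $\barN$-limits of $\barN$-cofinality $\omega_1$, where $\sigma``\eta$ fails to be cofinal in $\sigma(\eta)$. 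The task is to find a condition $p \in \P_S$ such that, for any generic $G \ni p$, one can construct in $V[G]$ an embedding $\sigma' : \barN \prec N$ with $\sigma'(\bar\P, \bar S, \bar\kappa, \bar\theta, \bar\mu) = \P_S, S, \kappa, \theta, \mu$, the rigidity clause $\sigma' \hook \bar\mu = \sigma \hook \bar\mu$, and $\sigma' `` \barG \subseteq G$, by threading the images of the gap-ordinals through $S$-points of the generic $\vec\alpha$ above $\nu$.

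This threading is exactly what the hypothesis $\nu^\omega < \mu = \kappa^+$ makes feasible, and is the main obstacle: the countably many values of $\sigma \hook \bar\mu$ that must be preserved contribute at most $\nu^\omega$ possible below-$\nu$ shapes, strictly fewer than the $\mu = \kappa^+$ many candidate end-segments of $\vec\alpha$ available above $\nu$. Stationarity of $S$ in $\kappa^+$ (non-reflection of $S$ below $\kappa^+$ is not obstructive here, since we are placing the image of $\bar N$ cofinally into $\vec\alpha$, not attempting to realize it inside a single uncountable-cofinality ordinal of $V$) together with a pigeonhole/density argument produces a $p$ along whose generic the required gap-suprema all land in $S$; the definable well-order of $N = L_\tau[A]$ then canonically selects the desired $\sigma'$ inside $V[G]$. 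This is Jensen's original construction from \cite{JensenCH} formulated correctly with $\nu^\omega < \mu$ replacing his implicit use of $\CH$; the ``Hulls'' clause is avoided throughout by working in the $\infty$-version.
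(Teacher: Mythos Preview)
Your overall approach---shoot a continuous $\omega_1$-sequence through $S$ with Jensen's forcing $\P_S$ and apply the forcing axiom to obtain a reflection point---is exactly what the paper invokes via its citations to \cite{Jen14} and \cite{Fuchs21}. But your sketch of why $\P_S$ is $\infty$-subcomplete above $\nu$ contains two genuine errors.

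First, the rigidity clause must read $\sigma' \hook \bar\nu = \sigma \hook \bar\nu$, not $\sigma' \hook \bar\mu$. In the statement $\mu = \kappa^+$, and every condition of $\bar\P_S$ is coded by a subset of $\bar\kappa^+ = \bar\mu$; demanding agreement up to $\bar\mu$ would force $\sigma'[\barG] = \sigma[\barG]$, i.e.\ completeness, which $\P_S$ lacks precisely when $\sup(\sigma[\barN] \cap \kappa^+) \notin S$. Second, your diagnosis of the obstacle is wrong. The critical point of $\sigma$ is $\bar\omega_1$, so every limit $\eta$ in the domain $\bar\omega_1$ of $\bar{\vec\alpha}$ has $\barN$-cofinality $\omega$; there simply are no ``$\barN$-limits of $\barN$-cofinality $\omega_1$'' to create gaps, and $\sigma \circ \bar{\vec\alpha}$ is genuinely continuous on all of $\bar\omega_1$ with limit values in $S$. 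The sole obstruction sits at the very top: $\sup(\sigma[\barN] \cap \kappa^+)$ need not lie in $S$, so $\sigma[\barG]$ may have no lower bound. The fix is not to thread anything through a $V$-generic but to replace $\sigma$ outright, already in $V$, by some $\sigma' : \barN \prec N$ agreeing with $\sigma$ on parameters and below $\bar\nu$ and satisfying $\sup(\sigma'[\bar\kappa^+]) \in S$; then $p := (\sigma' \circ \bar{\vec\alpha}) \cup \{\langle \bar\omega_1, \sup \sigma'[\bar\kappa^+]\rangle\}$ is a condition below every element of $\sigma'[\barG]$. The hypothesis $\nu^\omega < \kappa^+$ enters exactly here, to show that the admissible $\sigma'$ are plentiful enough that their suprema meet the stationary set $S$.
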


We remark that the definitions of $\square_\kappa$ and ``non-reflecting stationary set" are given in Sections 3 and 4 respectively where we use them. 

\begin{proof}
    This is essentially known though it needs to be pieced together from a few sources - particularly taking into account the error discussed before, again see \cite{FuchsERR}. First, in \cite{JensenCH}, Jensen uses the forcing notion (at $\kappa$) from \cite[Lemma 6.3 of Section 3.3]{Jen14} to obtain the failure of $\square_\kappa$ from $\SCFA$. Indeed it's easy to see that this forcing notion implies the non-existence of reflecting stationary sets and much more, see \cite{FuchsFA} for a detailed discussion of the effect of $\SCFA$ on square principles. As noted before, there is a missing assumption in the subcompleteness of the relevant forcing - namely that $\kappa > 2^{\aleph_0}$. Second, \cite[Lemma 3.5]{Fuchs21}, which contains no errors as written, implies that the forcing notion needed is indeed $\infty$-subcomplete above $\nu$ under the cardinal arithmetic assumptions mentioned in the theorem statement. See the proof of \cite[Lemma 3.5]{Fuchs21} and the discussion therein for more details.
\end{proof}

\section{Separating the $\infty$-$\SCFA \hook \mu$ Principles}

In this section we show that under certain cardinal arithmetic assumptions $\infty$-$\SCFA \hook \nu$ does not imply $\infty$-$\SCFA \hook \mu$ for $\mu < \nu$. Before proving this general theorem we introduce our technique with the simple example of separating $\infty$-$\SCFA \hook \omega_1$ from $\infty$-$\SCFA \hook \omega_2$. This involves showing that adding a $\square_{\omega_1}$-sequence to a model of $\infty$-$\SCFA$ preserves $\infty$-$\SCFA \hook \omega_2$. By contrast note that Theorem \ref{failureofsquare} proves that $\SCFA + \CH$ implies the failure of $\square_{\omega_1}$. Let us remark one more time that, as stated in the introduction the fact that $\SCFA$ can coexist with a $\square_{\omega_1}$-sequence closes the door on the aforementioned error by showing that the argument cannot be resurrected when $\CH$ fails. 

This case is treated as a warm-up and we extract from it a more general lemma for preservation of axioms of the form $\infty$-$\SCFA \hook \mu^+$ from which the other separation results are then derived.

\subsection{The Case of $\infty$-$\SCFA \hook \omega_2$: Adding a $\square_{\omega_1}$ Sequence}

Recall that for an uncountable cardinal $\lambda$ a $\square_\lambda$-sequence is a sequence $\langle C_\alpha \; | \; \alpha \in \lambda^+ \cap {\rm Lim}\rangle$ so that for all $\alpha$ the following hold:

\begin{enumerate}
    \item $C_\alpha$ is club in $\alpha$
    \item ${\rm ot}(\alpha) \leq \lambda$
    \item For each $\beta \in {\rm lim}(C_\alpha)$ we have that $C_\alpha \cap \beta = C_\beta$
\end{enumerate}

We recall the poset $\P_0$ from \cite[Example 6.6]{CummingsHB} for adding a square sequence. Conditions $p \in \P_0$ are functions so that the domain of $p$ is $\beta + 1 \cap {\rm Lim}$ for some $\beta \in \lambda^+ \cap {\rm Lim}$ and

\begin{enumerate}
    \item For all $\alpha \in {\rm dom}(p)$ we have that $p(\alpha)$ is club in $\alpha$ with order type $\leq \lambda$; and
    \item If $\alpha \in {\rm dom}(p)$ then for each $\beta \in {\rm lim} (p(\alpha))$ we have $p(\alpha) \cap \beta = p(\beta)$
\end{enumerate}
The order is end extension. We remark that a moment's reflection confirms that this poset is $\sigma$-closed. Moreover it is ${<}\lambda^+$-strategically closed (see \cite{CummingsHB}). In particular it preserves cardinals up to $\lambda^+$.

\begin{theorem}
Assume $\infty$-$\SCFA \hook \omega_2$ and let $\P_0$ be the forcing notion defined above for adding a $\square_{\omega_1}$-sequence. Then $\forces_{\P_0}$ $\infty$-$\SCFA \hook \omega_2$. In particular if the existence of a supercompact cardinal is consistent with $\ZFC$ then $\infty$-$\SCFA \hook \omega_2 + \square_{\omega_1}$ is consistent as well.\label{addasquare}
\end{theorem}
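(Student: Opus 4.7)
The natural approach is to apply Cox's preservation theorem (Theorem \ref{Seansthm}), taking $\Gamma$ to be the class of forcing notions that are $\infty$-subcomplete above $\omega_2$; closure of this class under restrictions is routine. It then suffices, for each $\P_0$-name $\dot{\Q}$ forced into $\Gamma$, to produce a $\P_0 * \dot{\Q}$-name $\dot{\R}$ verifying conditions (1) and (2) of that theorem.

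The candidate for $\dot{\R}$ is the \emph{threading forcing} associated with the generic $\square_{\omega_1}$-sequence $\dot{\vec{C}} = \langle \dot{C}_\alpha : \alpha \in \omega_2 \cap \mathrm{Lim}\rangle$ added by $\P_0$: conditions are countable closed bounded $t \subseteq \omega_2$ with $t \cap \alpha = \dot{C}_\alpha$ for every $\alpha \in \mathrm{lim}(t)$, ordered by end-extension. Standard arguments (see e.g.~\cite[Example 6.7]{CummingsHB}) show this forcing is $\sigma$-closed in $V^{\P_0}$. Since $\dot{\Q}$ adds no countable sequences of ordinals below $\omega_2$ by Lemma \ref{newseqs}, the forcing $\dot{\R}$ remains $\sigma$-closed, and hence $\infty$-subcomplete above $\omega_2$, in $V^{\P_0 * \dot{\Q}}$. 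Applying the iteration theorem for nice iterations to the three-step iteration of forcings each $\infty$-subcomplete above $\omega_2$ then yields condition (1): $\P_0 * \dot{\Q} * \dot{\R}$ is $\infty$-subcomplete above $\omega_2$.

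For condition (2), suppose $j : V \to N$ is a generic embedding satisfying (a)--(d) of Theorem \ref{Seansthm}, with $G * H * K \in N$ the corresponding $(H_\theta^V, \P_0 * \dot{\Q} * \dot{\R})$-generic triple. Since $\mathrm{crit}(j) = \omega_2^V$ and each $p \in G$ is coded by ordinals below $\omega_2^V$, we have $j(p) = p$ for every such $p$; thus $j``G = G$ and $\vec{C} := \bigcup G$ is the $\square_{\omega_1}$-sequence on $\omega_2^V \cap \mathrm{Lim}$. From $K$ we extract the thread $D := \bigcup K$. A straightforward genericity argument shows that $D$ is, in $N$, a club in $\omega_2^V$ of order type $\omega_1$ (using that $\omega_2^V$ has cofinality $\omega_1$ in $N$ by (b)) cohering with $\vec{C}$. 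Define $p^* \in j(\P_0) \cap N$ with domain $(\omega_2^V + 1) \cap \mathrm{Lim}$ by setting $p^*(\alpha) = C_\alpha$ for $\alpha < \omega_2^V$ and $p^*(\omega_2^V) = D$. That $p^*$ is a valid condition in $j(\P_0)$ follows from coherence of $\vec{C}$ below $\omega_2^V$, coherence of $D$ with $\vec{C}$ at $\omega_2^V$, and the order-type bound $\mathrm{ot}(D) = \omega_1 = j(\omega_1)$. Since $p^*$ end-extends each $p \in G$, it is a lower bound for $j``G$ in $j(\P_0) \cap N$, establishing (2).

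The main delicate points are purely combinatorial: verifying the $\sigma$-closure of $\dot{\R}$ in $V^{\P_0 * \dot{\Q}}$ and the closure of the generic thread $D$ in $N$, both hinging on $\square$-coherence combined with Lemma \ref{newseqs}. Once these are established, the remaining verifications for Cox's theorem are essentially formal. The consistency statement then follows: start with any model of $\infty$-$\SCFA$ (consistent modulo a supercompact cardinal), which in particular satisfies $\infty$-$\SCFA \hook \omega_2$, and force with $\P_0$ to obtain a model of $\infty$-$\SCFA \hook \omega_2 + \square_{\omega_1}$.
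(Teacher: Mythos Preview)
Your verification of condition (2) of Cox's theorem is essentially the paper's argument and is fine. The gap is in your argument for condition (1).

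The threading forcing $\dot{\R}$ is \emph{not} $\sigma$-closed in $V^{\P_0}$. Given a descending sequence $t_0 \geq t_1 \geq \cdots$ in $\R$ with $\alpha = \sup \bigcup_n t_n$, any lower bound would have $\alpha$ as a limit point and hence require $\bigcup_n t_n = C_\alpha$; but $C_\alpha$ was fixed by the $\P_0$-generic and there is no reason it should equal this particular union. (Concretely: pick any cofinal $\omega$-sequence in $\alpha$ with no internal limit points and distinct from $C_\alpha$, and use its finite initial segments as the $t_n$; these are legitimate threading conditions with no lower bound.) What \emph{is} true, and what the paper invokes, is that the two-step $\P_0 * \dot{\R}$ has a dense $\sigma$-closed subset \cite[Lemma 6.9]{cummings2001squares}: when the square and the thread are built simultaneously, one can always arrange $C_\alpha$ to agree with the thread at the new top point.

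Because of this, one cannot simply feed $\P_0$, $\dot{\Q}$, $\dot{\R}$ one at a time into an iteration theorem. The paper instead proves directly, by hand, that $\P_0 * \dot{\Q} * \dot{\R}$ is $\infty$-subcomplete above $\omega_2$. The key maneuver is that since $\dot{\Q}$ adds no countable sets of ordinals below $\omega_2$, $\dot{\R}$ is the same whether computed in $V^{\P_0}$ or $V^{\P_0 * \dot{\Q}}$, so the last two factors commute and the three-step may be reordered as $(\P_0 * \dot{\R}) * \dot{\Q}$. One then uses the $\sigma$-closed dense subset of $\P_0 * \dot{\R}$ to find a lower bound $(p,t)$ for $\sigma``(\barG * \bar{K})$ and to lift $\sigma$ to $\sigma_0:\barN[\barG]\prec N[G]$; applies the $\infty$-subcompleteness of $\Q$ above $\omega_2$ in $V[G]$ to obtain $q$ and the twisted embedding $\sigma_1$ handling $\bar{H}$; and finally uses the clause $\sigma_1 \hook \bar{\omega}_2 = \sigma \hook \bar{\omega}_2$ to conclude that $\sigma_1$ still maps $\barG$ and $\bar{K}$ (both coded by subsets of $\bar{\omega}_2$) the same way $\sigma$ did, so that $(p,\dot{q},t)$ is the desired witness. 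This last step is precisely where the ``above $\omega_2$'' hypothesis does real work.

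A secondary point: the nice-iteration theorem you cite is stated in \cite{FS2020} for plain $\infty$-subcompleteness, not for the ``above $\mu$'' variants, so even had your $\sigma$-closure claim been correct that appeal would require separate justification.
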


Before proving this theorem we need to define one more poset. Recall that if $G \subseteq \P_0$ is generic and $\vec{\mathcal C}_G = \langle C_\alpha \; | \; \alpha \in \lambda^+ \cap {\rm Lim}\rangle$ is the generic $\square_\lambda$-sequence added by $G$ then for any cardinal $\gamma < \lambda$ we can \emph{thread the square sequence} via the following poset, $\T_{G, \gamma}$. Conditions are closed, bounded subsets $c \subseteq \lambda^+$ so that $c$ has order type $<\gamma$, and for all limit points $\beta \in c$ we have that $\beta \cap c = C_\beta$. See \cite[\S 6]{cummings2001squares} and \cite[p.7]{MLH} for more on this threading poset. The point is the following.
\begin{fact}[Lemma 6.9 of \cite{cummings2001squares}]
Let $\gamma < \lambda$ be cardinals, $\P_0$ the forcing notion described above for adding a $\square_\lambda$-sequence and $\dot{\T}_{\dot{G}, \gamma}$ be the $\P_0$-name for the forcing to thread the generic square sequence with conditions of size $<\gamma$. Then $\P_0 * \dot{\T}_{\dot{G}, \gamma}$ has a dense $<\gamma$-closed subset.
\end{fact}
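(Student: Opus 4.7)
The plan is to exhibit an explicit dense subset $D \subseteq \P_0 * \dot{\T}_{\dot{G},\gamma}$ and verify directly that $D$ is $<\gamma$-closed. Conditions in $D$ will be pairs $(p, \check c)$ for which the threading coordinate has been decided to a specific set $c \in V$, and in which $c$ and the top level of $p$ have been aligned: declare $(p,\check c)\in D$ if and only if $\alpha := \max(\mathrm{dom}(p))$ equals $\max(c)$, the identity $c = p(\alpha) \cup \{\alpha\}$ holds, and $\mathrm{ot}(p(\alpha)) < \gamma$. The essential point is that the $\square$-coherence of $p$ already delivers the threading coherence of $c$: for any $\beta \in c \cap \alpha$ which is a limit point of $c$, $\beta$ lies in $\mathrm{lim}(p(\alpha))$, and so $p$ forces $C_\beta = p(\beta) = p(\alpha) \cap \beta = c \cap \beta$, which is exactly the threading requirement imposed by $\T_{\dot{G},\gamma}$.

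To show density, start with an arbitrary $(p_0, \dot c_0)$. First pass to some $(p_1, \check c_1)$ that decides the second coordinate as a specific $c_1$; one may further arrange $\max(c_1) \in \mathrm{dom}(p_1)$ with $p_1(\max c_1) = c_1 \setminus \{\max c_1\}$, since this is already forced by the threading coherence of $c_1$. Now pick a limit ordinal $\alpha^* > \max(c_1)$ of cofinality less than $\gamma$ and extend $p_1$ to a condition $p$ with $\max(\mathrm{dom}(p)) = \alpha^*$, choosing $p(\alpha^*)$ to be a club in $\alpha^*$ of order type $\mathrm{cf}(\alpha^*) < \gamma$ whose initial segment through $\max(c_1)$ reproduces $c_1 \setminus \{\max c_1\}$; any club of length $\mathrm{cf}(\alpha^*)$ in the interval $(\max c_1, \alpha^*)$ can be appended, with $\square$-coherence maintained at the new intermediate limits because each such value is just an initial segment of $p(\alpha^*)$. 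Setting $c := p(\alpha^*) \cup \{\alpha^*\}$ places $(p, \check c)$ in $D$ below $(p_0, \dot c_0)$.

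For $<\gamma$-closure, assume $\gamma$ is regular; the singular case reduces to this via the usual strategy played at each cofinality $< \gamma$. Given a decreasing sequence $(p_\xi, \check c_\xi)_{\xi < \delta}$ in $D$ with $\delta < \gamma$, write $\alpha_\xi := \max(\mathrm{dom}(p_\xi))$ and set $\alpha^* := \sup_{\xi<\delta}\alpha_\xi$. Let $p^*$ agree with $\bigcup_{\xi<\delta} p_\xi$ on its existing domain, and define $p^*(\alpha^*) := \bigcup_{\xi<\delta}(c_\xi \setminus \{\alpha_\xi\})$ and $c^* := p^*(\alpha^*) \cup \{\alpha^*\}$. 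Because $\delta < \gamma$ and each $c_\xi$ has order type less than $\gamma$, regularity of $\gamma$ yields $\mathrm{ot}(p^*(\alpha^*)) < \gamma$. The club property of $p^*(\alpha^*)$ in $\alpha^*$ follows from the accumulation of the $\alpha_\xi$ at $\alpha^*$, while $\square$-coherence at $\alpha^*$ holds because any limit point $\beta$ of $p^*(\alpha^*)$ is a limit point of some $c_\xi$ with $\xi$ large enough, and then $p_\xi(\beta) = c_\xi \cap \beta = p^*(\alpha^*) \cap \beta$ by the alignment condition at stage $\xi$.

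The main obstacle is this last coherence check: one has to be sure that the family of clubs $(p_\xi(\alpha_\xi))_\xi$ fits together into an end-extending tower whose union genuinely satisfies $\square$-coherence at every intermediate limit. This is precisely where the defining property of $D$ earns its keep: at every previous stage $p_\xi(\alpha_\xi) = c_\xi \setminus \{\alpha_\xi\}$, so the towers $(p_\xi(\alpha_\xi))_{\xi<\delta}$ and $(c_\xi \setminus \{\alpha_\xi\})_{\xi<\delta}$ coincide and inherit the threading coherence propagated up through $\delta$. Once this alignment is observed, the verification that $(p^*, \check c^*) \in D$ and that it sits below each $(p_\xi, \check c_\xi)$ is routine, completing the proof.
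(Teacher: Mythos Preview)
The paper does not supply its own proof of this Fact; it is quoted from \cite{cummings2001squares} and used as a black box. Your argument is the standard one and is correct in outline: the dense set of pairs $(p,\check c)$ with $\max(\mathrm{dom}(p)) = \max(c)$, $c = p(\max c)\cup\{\max c\}$ and $\mathrm{ot}(p(\max c)) < \gamma$ is exactly the right object, and your closure verification goes through.

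Two places deserve tightening. In the density step, the assertion that one may arrange $p_1(\max c_1) = c_1\setminus\{\max c_1\}$ ``since this is already forced by the threading coherence of $c_1$'' is only valid when $\max c_1$ is a \emph{limit point} of $c_1$; for $c_1$ with isolated maximum (say $c_1$ finite) no such relation is forced. This is easily repaired: skip that intermediate step and directly build $p(\alpha^*)$ end-extending $c_1$, noting that coherence at a genuine limit point $\beta$ of $c_1$ is guaranteed because $p_1$ already forces $c_1\cap\beta = C_\beta = p_1(\beta)$. Second, when extending $p_1$ to have domain $(\alpha^*+1)\cap\mathrm{Lim}$ you must assign $p(\alpha)$ at \emph{every} limit $\alpha$ in the gap, not only at limit points of $p(\alpha^*)$; the simplest fix is to take the tail of $p(\alpha^*)$ above $\max(\mathrm{dom}(p_1))$ to consist of successor ordinals and to fill in the remaining limit $\alpha$ with arbitrary cofinal $\omega$-sequences of successors.

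Finally, your handling of singular $\gamma$ does not work as stated: a descending chain of length $\mathrm{cf}(\gamma)$ in $D$ can have $\sup_\xi\mathrm{ot}(c_\xi)=\gamma$, so the union fails to be a condition. Since the paper only ever invokes the case $\gamma=\aleph_1$ this is irrelevant for the applications, but the general singular case needs either a different formulation or a separate argument.
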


We can now prove Theorem \ref{addasquare}.

\begin{proof}
We let $\P_0$ be the forcing described above for adding a $\square_{\omega_1}$-sequence (so $\lambda = \omega_1$). Let $\gamma = \aleph_1$ so in $V^{\P_0}$ the threading poset $\dot{\T}:=\dot{\T}_{\dot{G}, \aleph_1}$ consists of countable closed subsets of $\omega_2$. We want to apply Theorem \ref{Seansthm} to $\P_0$. Note that if $\dot{\Q}$ is a $\P_0$-name for an $\infty$-subcomplete above $\omega_2$ forcing notion, then $\dot{\T} =\dot{\T}_{\dot{G}, \aleph_1}$ is absolute between $V^{\P_0}$ and $V^{\P_0 * \dot{\Q}}$ by Lemma \ref{newseqs} (2).

\begin{claim}
It is enough to show that for any $\P_0$-name $\dot{\Q}$ for a forcing notion which is $\infty$-subcomplete above $\omega_2$, the three step $\P_0 * \dot{\Q} * \dot{\T}$ is $\infty$-subcomplete above $\omega_2$. \label{thread_lowerbound}
\end{claim}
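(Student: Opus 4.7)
The strategy is to apply Cox's preservation theorem (Theorem~\ref{Seansthm}) to $\P = \P_0$ with $\Gamma$ taken to be the class of forcing notions that are $\infty$-subcomplete above $\omega_2$ (a class closed under restrictions, as noted just before Theorem~\ref{Seansthm}) and auxiliary name $\dot{\R} := \dot{\T}$. Condition~(1) of Cox's theorem---that $\P_0 * \dot{\Q} * \dot{\T}$ lies in $\Gamma$ whenever $\dot{\Q}$ is a $\P_0$-name for a forcing in $\Gamma$---is precisely the hypothesis the claim aims to reduce the theorem to. The real work of the claim is therefore to verify condition~(2): given a generic elementary embedding $j : V \to N$ satisfying (a)--(d) together with a generic $G * H * K \in N$ for $\P_0 * \dot{\Q} * \dot{\T}$, the image $j``G$ must admit a lower bound in $j(\P_0) \cap N$.

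The lower bound will come essentially for free from the thread $K$. Since $\mathrm{crit}(j) = \omega_2^V$ and every condition $p \in \P_0$ lies in $V_{\omega_2^V}$ (its domain is a bounded set of limits below $\omega_2^V$ and its values are clubs in countable-cofinality ordinals below $\omega_2^V$), $j$ fixes each such $p$ pointwise, so $j``G = G$ as a subset of $j(\P_0)$. Writing $\vec{C} = \bigcup G = \langle C_\alpha : \alpha \in \omega_2^V \cap \mathrm{Lim}\rangle$ for the generic $\square_{\omega_1}$-sequence, the natural candidate for a lower bound is the condition $p^* \in j(\P_0)$ with $\mathrm{dom}(p^*) = (\omega_2^V + 1) \cap \mathrm{Lim}$, $p^* \hook \omega_2^V = \vec{C}$, and $p^*(\omega_2^V) = K$. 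To see that $p^* \in j(\P_0) \cap N$, we only need that in $N$ the set $K$ is closed and unbounded in $\omega_2^V$, has order type at most $j(\omega_1) = \omega_1$, and coheres with $\vec{C}$ in the sense that $K \cap \beta = C_\beta$ for every limit point $\beta$ of $K$---and all three properties are built directly into the definition of the threading forcing $\dot{\T}$. By construction $p^* \leq q$ for every $q \in G = j``G$, supplying the required lower bound.

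With both hypotheses of Cox's theorem verified, we conclude that $\P_0$ preserves $\mathsf{FA}(\Gamma) = \infty\text{-}\SCFA \hook \omega_2$, so that Theorem~\ref{addasquare} follows from the three-step $\infty$-subcompleteness statement in the claim. The genuinely substantive obstacle---not a part of the claim itself---will be to establish that $\P_0 * \dot{\Q} * \dot{\T}$ is $\infty$-subcomplete above $\omega_2$ for every $\P_0$-name $\dot{\Q}$ for a forcing in $\Gamma$. For that, the key ingredients will be the fact that $\P_0 * \dot{\T}$ has a $\sigma$-closed dense subset (so its contribution to the iteration is essentially complete, hence subcomplete) together with Lemma~\ref{newseqs}(2), which ensures that $\dot{\Q}$ adds no countable sequences of ordinals below $\omega_2$ and therefore that the forcing $\dot{\T}$ is computed identically in $V^{\P_0}$ and in $V^{\P_0 * \dot{\Q}}$, as already noted in the paragraph preceding the claim.
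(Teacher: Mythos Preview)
Your argument is correct and mirrors the paper's proof exactly: both apply Cox's theorem with $\dot{\R} = \dot{\T}$, observe that $j``G = G$ since $\mathrm{crit}(j) = \omega_2^V$, and obtain the lower bound by extending $\bigcup G$ with the generic thread at the new top coordinate $\omega_2^V$. The only slip is notational---your $p^*(\omega_2^V)$ should be $\bigcup K$ rather than the filter $K$ itself, as the paper writes.
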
 

\begin{proof}[Proof of Claim]
This is because $\T$ adds a lower bound to $j``G$ as described in the statement of Theorem \ref{Seansthm}. In more detail, let $\dot{\Q}$ be a $\P_0$-name for a forcing notion which is $\infty$-subcomplete above $\omega_2$, we want to show that for $\dot{\R} = \dot{\T}$ the hypotheses of Theorem \ref{Seansthm} are satisfied assuming that $\P_0 * \dot{\Q} * \dot{\T}$ is $\infty$-subcomplete above $\omega_2$. Since this is exactly the first clause we only need to concern ourselves with the second one. Recall that, relativized to this situation, this says that {\bf if} $j:V \to N$ is a generic elementary embedding, $\theta \geq |\P_0 * \dot{\Q} * \dot{\T}|^+$ is regular in $V$ and
    
    a) $H_\theta^V$ is in the wellfounded part of $N$;
    
    b) $j``H_\theta^V \in N$ has size $\omega_1$ in $N$;
    
    c) ${\rm crit}(j) = \omega_2^V$
    
    d) There exists a $G * H * K$ in $N$ that is $(H_\theta^V, \P_0 * \dot{\Q} * \dot{\T})$-generic
    
   \noindent {\bf Then} $N$ believes that $j`` G$ has a lower bound in $j(\P_0)$.
   
So fix some $\theta$ and $j:V \to N$ as described in a) to d). Note that $j`` G = G$ by c) and the fact that $G$ is coded as a subset of $\omega_2^V$. Thus it suffices to find a lower bound of $G$ in $j(\P_0)$. The point is now though that since $G * H * K \in N$ we can in particular form $\bigcup K \in N$ which is a club subset of $\omega_2^V = \sup_{p \in G} {\rm dom} (p)$ and coheres with all of the elements of $G$, and hence $(\bigcup G) \cup \langle \omega_2^V , \bigcup K \rangle$ is as needed.
\end{proof}

Let us now show that $\P_0 * \dot{\Q} * \dot{\T}$ is $\infty$-subcomplete above $\omega_2$. Let $\tau > \theta$ be sufficiently large cardinals and $\sigma:\barN \prec N = L_\tau[A] \supseteq H_\theta$ be as in the definition of $\infty$-subcompleteness above $\omega_2$. Let $\sigma (\bar{\P}_0, \dot{\bar{\Q}}, \dot{\bar{\T}}) = \P_0, \dot{\Q}, \dot{\T}$. Let $\barG * \bar{H} * \bar{K}$ be $\bar{\P}_0 * \dot{\bar{\Q}} * \dot{\bar{\T}}$-generic over $\barN$. There are few things to note. First let us point out that $\barG$ and $\bar{K}$ are (coded as) subsets of $\bar{\omega}_2$, the second uncountable cardinal from the point of view of $\barN$ (so $\sigma(\bar{\omega}_2) = \omega_2$). Next note that $\P_0 * \dot{\Q} * \dot{\T}$ is isomorphic to $\P_0 * \dot{\T} * \dot{{\Q}}$ since both $\dot{\Q}$ and $\dot{\T}$ are in $V^{\P_0}$, and the same for the ``bar" versions in $\barN$ (i.e. we have a product not an iteration for the second and third iterands). Now note that since $\P_0 *\dot{\T}$ has a $\sigma$-closed dense subset, $\sigma `` \barG * \bar{K}$ has a lower bound (in $N$), say $(p, t)$ ($t$ is in the ground model and the $\sigma$-closed dense subset is simply the collection of conditions whose second coordinate is a check name decided by $p$). By $\sigma$-closedness (which again is implies completeness) $(p, t)$ forces that there is a unique lift of $\sigma:\bar{N} \prec N$ to some $\sigma_0:\barN[\barG] \prec N[G]$ with $\sigma_0(\barG) = G$ for any $\P_0$-generic $G\ni p$ (technically we need to work in the extension by $\P_0 * \dot{\T}$, but we only want to specify the embedding of the $\bar{\P}_0$ extension). Fix such a $G$ (from which $\sigma_0$ is defined) and work in $V[G]$. Note that $\sigma_0 ``\bar{K} = \sigma ``\bar{K}$ has $t \in N$ as a lower bound. Now in $V[G]$ (NOT $V[G][K]$) we have that $\Q := \dot{\Q}^G$ is $\infty$-subcomplete above $\omega_2$ as $\P$ forced this to be so by assumption. Therefore in $V[G]$ we can apply the definition of $\infty$-subcompleteness to $\sigma_0: \barN[\barG] \prec N[G]$ to obtain a condition $\dot{q}^G:= q \in \Q$ so that if $H \ni q$ is $\Q$-generic over $V[G]$ then in $V[G][H]$ there is a $\sigma_1:\bar{N}[\barG] \prec N[G]$ so that $\sigma_1(\barG, \bar{\P}_0, \dot{\bar{\Q}}^{\barG}, \dot{\bar{\T}}^{\barG}) = G, \P_0, \Q, \T$ where $\T \in V[G]$ is $\dot{\T}^G$, $\sigma_1 `` \bar{H} \subseteq H$ and $\sigma_1 \hook \bar{\omega}_2 = \sigma \hook \bar{\omega}_2$. Note also that by condensation we have that $\barN = L_{\bar{\tau}}[\bar{A}]$ and hence we can ensure that $\sigma_1 \hook \barN: \barN \prec N$. Let us denote by $\sigma_2$ this restriction $\sigma_1 \hook \barN$. As this is an element of $V[G][H]$ there is, in $V$ a $\P_0 * \dot{\Q}$-name for this embedding, which we will call $\dot{\sigma_2}$. 

Now by the first observation above we know that since $\barG$ and $\bar{K}$ are coded as subsets of $\bar{\omega}_2$ so it must be the case that in fact $\sigma_2 \hook \barG = \sigma \hook \barG$ and idem for $\bar{K}$. In particular $(p, t)$ is still a lower bound of $\sigma_1 `` \barG * \bar{K}$. But putting all of these observations together now ensures that the triple $(p, \dot{q}, t) \in \P_0 * \dot{\Q} * \dot{\T}$ forces that $\sigma_2 := \sigma_1 \hook \barN$ is as needed to witness that the three step is $\infty$-subcomplete above $\omega_2$ as needed.
\end{proof}

Note the following corollary of Theorem \ref{addasquare}.
\begin{corollary}
The forcing axiom $\infty$-$\SCFA$ does not imply $\MA^+(\sigma{\mbox{\rm -closed}})$ assuming the consistency of a supercompact cardinal. In particular $\infty$-$\SCFA$ does not imply $\SCFA^+$. \label{separate1}
\end{corollary}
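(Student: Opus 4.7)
The plan is to combine Theorem \ref{addasquare} with the classical fact that $\MA^+(\sigma\mbox{\rm -closed})$ is incompatible with $\square_{\omega_1}$. First I would invoke Theorem \ref{addasquare}, starting the construction from a ground model in which $2^{\aleph_0}\geq\omega_2$ (a property preserved by the $\sigma$-closed forcing $\P_0$), to produce a model in which both $\infty$-$\SCFA\hook\omega_2 = \infty$-$\SCFA$ and $\square_{\omega_1}$ hold.

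The main task is then to verify that $\MA^+(\sigma\mbox{\rm -closed})$ rules out $\square_{\omega_1}$. My plan here is the standard two-step argument: first extract from a given $\square_{\omega_1}$-sequence $\langle C_\alpha : \alpha < \omega_2,\, \alpha\in\mathrm{Lim}\rangle$ a non-reflecting stationary set $S\subseteq \omega_2\cap{\rm cof}(\omega)$ (for instance the set of $\alpha$ at which ${\rm ot}(C_\alpha)=\omega$, with non-reflection at every point of cofinality $\omega_1$ following from the coherence clause of the sequence); then derive a contradiction by applying the plus axiom to the $\sigma$-closed collapse $\mathrm{Col}(\omega_1,\omega_2)$ together with a name $\dot T$ for a stationary subset of $\omega_1$ obtained by pulling $S$ back along the generic surjection $h:\omega_1\to \omega_2^V$. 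The $\MA^+$ witness produces, back in $V$, a point $\beta<\omega_2$ of cofinality $\omega_1$ at which $S$ reflects, contradicting the construction of $S$.

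The hard part will be fine-tuning the pull-back step, namely choosing the right name $\dot T$ and verifying that $\dot T^G$ is genuinely stationary in $V$, but this is a well-worn technique in the theory of forcing axioms, closely mirroring the standard argument that $\MM\Rightarrow \forall\kappa\,\neg\square_\kappa$ via stationary reflection at $\omega_2$. For the ``in particular'' clause my plan is short: $\SCFA^+$ implies $\MA^+(\sigma\mbox{\rm -closed})$, because by \cite[Lemma 1.3, Chapter 3]{Jen14} (recalled in the excerpt) every $\sigma$-closed forcing is forcing equivalent to a complete, and hence subcomplete, forcing, so the plus version of $\SCFA$ specializes to $\MA^+(\sigma\mbox{\rm -closed})$. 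Consequently the failure of $\MA^+(\sigma\mbox{\rm -closed})$ in our model automatically yields the failure of $\SCFA^+$, giving $\infty$-$\SCFA\not\to\SCFA^+$, as required.
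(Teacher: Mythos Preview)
Your approach is correct and matches the paper's proof: start from a model of $\infty$-$\SCFA$ with $2^{\aleph_0}=\aleph_2$, apply Theorem~\ref{addasquare} to get $\infty$-$\SCFA\hook\omega_2+\square_{\omega_1}$, observe that $2^{\aleph_0}=\aleph_2$ persists so $\infty$-$\SCFA\hook\omega_2=\infty$-$\SCFA$, and conclude that $\MA^+(\sigma\text{-closed})$ fails because it rules out $\square_{\omega_1}$. The only difference is cosmetic: the paper simply cites \cite{FMS} for the implication $\MA^+(\sigma\text{-closed})\Rightarrow\neg\square_{\omega_1}$, whereas you sketch the standard stationary-reflection argument; and the paper records the extra hypothesis $2^{\aleph_1}=\aleph_2$ on the ground model to guarantee that $\P_0$ preserves all cardinals. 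One small tightening: you write $2^{\aleph_0}\geq\omega_2$, but to identify $\infty$-$\SCFA$ with $\infty$-$\SCFA\hook\omega_2$ in the ground model (so that Theorem~\ref{addasquare} applies) you want $2^{\aleph_0}=\omega_2$; the standard models (e.g.\ $\MM$) give exactly this.
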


\begin{proof}
Begin with a model of $\infty$-$\SCFA + 2^{\aleph_0} = 2^{\aleph_1} = \aleph_2$ (for instance a model of $\MM$). Force with $\P_0$ to preserve these axioms and add a $\square_{\omega_1}$-sequence. Then $\infty$-$\SCFA \hook \omega_2$ and $\square_{\omega_1}$ hold in the extension by Theorem \ref{addasquare}. But, since $\P_0$ does not collapse cardinals (by $2^{\aleph_1} = \aleph_2$) or add reals, the continuum is still $\aleph_2$ hence $\infty$-$\SCFA$ holds yet $\MA^+(\sigma \mbox{-closed})$ fails since this axiom implies that $\square_\kappa$ fails for all $\kappa$, see \cite{FMS}.
\end{proof}

\subsection{The General Case}
The proof of Theorem \ref{addasquare} can be generalized in many ways. Observe that very little about $\P_0$ and $\dot{\mathbb T}$ were used. In fact essentially the same proof as above really shows the following general metatheorem. 

\begin{theorem}
    Let $\mu$ be an uncountable cardinal . Let $\P$ be a poset whose conditions as well as any generic $G$ can be coded by subsets of $\mu^+$ and let $\dot{\R}$ be a $\P$-name for a poset which is forced to be so that all of its conditions and any generic $K$ are coded by subsets of $\mu^+$. Assume moreover that $\P *\dot{\R}$ has a $\sigma$-closed dense subset and $\forces_\P \dot{\R} \subseteq V$ i.e. all of the elements of $\dot{R}$ are in the ground model\footnote{For instance in the case of Theorem \ref{addasquare} this follows from the strategic closure.}. Then for every $\dot{\Q}$ a $\P$-name for a $\infty$-subcomplete above $\mu^+$ poset the poset $\P * \dot{\Q} * \dot{\R}$ is $\infty$-subcomplete above $\mu^+$. 
    
    Consequently if $\P * \dot{\Q} * \dot{\R}$ satisfies (2) of Theorem \ref{Seansthm} and $\infty$-$\SCFA \hook \mu^+$ holds then $\P$ preserves $\infty$-$\SCFA \hook \mu^+$. \label{meta}
\end{theorem}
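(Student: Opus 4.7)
The proof follows the structure of Theorem~\ref{addasquare} almost verbatim, with $\omega_2$ replaced throughout by $\mu^+$ and the specific facts about $\P_0$ and $\dot{\T}$ replaced by the abstract hypotheses. First I will establish the structural claim that $\P * \dot{\Q} * \dot{\R}$ is $\infty$-subcomplete above $\mu^+$; the consequence about preservation of the forcing axiom is then immediate from Cox's Theorem~\ref{Seansthm}, since clause (1) of that theorem is exactly the structural claim, clause (2) is hypothesized, and the class of forcings $\infty$-subcomplete above $\mu^+$ is closed under restrictions.

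For the structural claim, fix $\tau > \theta$ sufficiently large and a countable, transitive, full $\sigma\colon \barN \prec N = L_\tau[A] \supseteq H_\theta$ sending the bars of all relevant parameters (in particular $\overline{\mu^+}$) to their unbarred counterparts, and let $\barG * \bar{H} * \bar{K}$ be generic for $\bar{\P} * \dot{\bar{\Q}} * \dot{\bar{\R}}$ over $\barN$. The hypothesis $\forces_\P \dot{\R} \subseteq V$ ensures that $\dot{\R}$ and $\dot{\Q}$, viewed as $\P$-names, commute, so $\P * \dot{\Q} * \dot{\R}$ is isomorphic to $\P * \dot{\R} * \dot{\Q}$ (and analogously inside $\barN$). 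Using the $\sigma$-closed dense subset of $\P * \dot{\R}$, I then pick a lower bound $(p,t)$ of $\sigma``(\barG * \bar{K})$ in that dense subset; by completeness, for any $\P$-generic $G \ni p$ the embedding $\sigma$ lifts uniquely to $\sigma_0\colon \barN[\barG] \prec N[G]$ with $\sigma_0(\barG) = G$.

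Working in $V[G]$, $\Q := \dot{\Q}^G$ is $\infty$-subcomplete above $\mu^+$ by hypothesis, so applying the definition to $\sigma_0$ yields a condition $q \in \Q$ such that whenever $H \ni q$ is $\Q$-generic over $V[G]$, there is in $V[G][H]$ an embedding $\sigma_1\colon \barN[\barG] \prec N[G]$ satisfying $\sigma_1``\bar{H} \subseteq H$ and $\sigma_1 \hook \overline{\mu^+} = \sigma_0 \hook \overline{\mu^+} = \sigma \hook \overline{\mu^+}$. By condensation applied to $N = L_\tau[A]$, the restriction $\sigma_2 := \sigma_1 \hook \barN$ is an elementary embedding $\barN \prec N$. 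The decisive last step is that, since $\barG$ and $\bar{K}$ are by hypothesis coded as subsets of $\overline{\mu^+}$, the agreement $\sigma_2 \hook \overline{\mu^+} = \sigma \hook \overline{\mu^+}$ forces $\sigma_2``\barG = \sigma``\barG$ and $\sigma_2``\bar{K} = \sigma``\bar{K}$; hence $(p,t)$ still bounds $\sigma_2``(\barG * \bar{K})$, and the triple $(p, \dot{q}, t) \in \P * \dot{\Q} * \dot{\R}$ forces that $\sigma_2$ witnesses $\infty$-subcompleteness above $\mu^+$ of the three-step.

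The main obstacle is the same bookkeeping as in Theorem~\ref{addasquare}, now carried out abstractly, but the role of each hypothesis becomes transparent. The assumption $\forces_\P \dot{\R} \subseteq V$ is what permits commuting $\dot{\Q}$ and $\dot{\R}$, which in turn allows the $\sigma$-closedness of $\P * \dot{\R}$ to be exploited \emph{before} one invokes $\infty$-subcompleteness of $\dot{\Q}$; if the two iterands were genuinely entangled, neither step would go through cleanly. The coding of conditions and generics as subsets of $\mu^+$ is what upgrades the partial agreement $\sigma_2 \hook \overline{\mu^+} = \sigma \hook \overline{\mu^+}$ produced by the $\infty$-subcompleteness above $\mu^+$ definition to the full identity $\sigma_2``(\barG * \bar{K}) = \sigma``(\barG * \bar{K})$ that is needed to conclude that $(p,t)$ still bounds the image.
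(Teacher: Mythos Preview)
Your proof is correct and follows the paper's argument essentially verbatim. One minor point of attribution: the commutation of $\dot{\Q}$ and $\dot{\R}$ holds simply because both are $\P$-names, whereas the hypothesis $\forces_\P \dot{\R} \subseteq V$ is actually used (as in the paper) to ensure $\bar{K} \subseteq \barN$ so that $\sigma``\bar{K}$ and $\sigma_0``\bar{K}$ make sense and agree, and to let the lower bound $t$ be taken as a ground-model object --- but this does not affect the validity of your argument.
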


\begin{proof}
    This is really just an abstraction of what we have already seen. Let $\tau > \theta$ be sufficiently large cardinals and $\sigma:\barN \prec N = L_\tau[A] \supseteq H_\theta$ be as in the definition of $\infty$-subcompleteness above $\mu^+$. Let $\sigma (\bar{\P}, \dot{\bar{\Q}}, \dot{\bar{\R}}, \bar{\mu}) = \P, \dot{\Q}, \dot{\R}, \mu$. Let $\barG * \bar{H} * \bar{K}$ be $\bar{\P} * \dot{\bar{\Q}} * \dot{\bar{\R}}$-generic over $\barN$. As in Theorem \ref{addasquare} note that first of all $\barG$ and $\bar{K}$ are (coded as) subsets of $\bar{\mu}^+$ (note $\bar{\mu}^+$, the successor of $\bar{\mu}$ as computed in $\barN$ is the same as $\bar{\mu^+}$, the preimage of $\mu^+$ under $\sigma$ by elementarity). Next note that $\P * \dot{\Q} * \dot{\R}$ is isomorphic to $\P * \dot{\R} * \dot{{\Q}}$ since both $\dot{\Q}$ and $\dot{\R}$ are in $V^{\P}$, and the same for the ``bar" versions in $\barN$ (i.e. we have a product not an iteration for the second and third iterands), just as before. Now note that since $\P *\dot{\R}$ has a $\sigma$-closed dense subset, there is a condition $(p, t) \in \P * \dot{\R}$ forcing $\sigma `` \barG * \bar{K}$ to be contained in the generic and moreover this condition is a lower bound on $\sigma ``\barG * \bar{K}$ by elementarity: since $\bar{N}$ thinks $\bar{\P} * \dot{\bar{\R}}$ has a $\sigma$-closed dense subset densely many of the conditions in $\barG * \bar{K}$ are in this set and hence their images are in the real $\sigma$-closed dense subset of $\P * \dot{\R}$ which in turn implies that we can find the lower bound. Note this condition $(p, t)$ is in $N$ and by the assumption that $\dot{\R}$ is forced to be contained in the ground model we can assume that $t \in V$ (and in fact in $N$). It follows that $(p, t)$ forces that there is a unique lift of $\sigma:\bar{N} \prec N$ to some $\sigma_0:\barN[\barG] \prec N[G]$ with $\sigma_0(\barG) = G$ for any $\P$-generic $G\ni p$ (technically we need to work in the extension by $\P * \dot{\R}$, but we only want to specify the embedding of the $\bar{\P}$ extension). Fix such a $G$ (from which $\sigma_0$ is defined) and work in $V[G]$. Note that $\sigma_0 ``\bar{K} = \sigma ``\bar{K}$ and, as already stated, $t \in N$ is a lower bound. Since $\Q := \dot{\Q}^G$ was forced to be $\infty$-subcomplete above $\mu^+$, working in $V[G]$ we can apply the definition of $\infty$-subcompleteness to $\sigma_0: \barN[\barG] \prec N[G]$ to obtain a condition $\dot{q}^G:= q \in \Q$ so that if $H \ni q$ is $\Q$-generic over $V[G]$ then in $V[G][H]$ there is a $\sigma_1:\bar{N}[\barG] \prec N[G]$ so that $\sigma_1(\barG, \bar{\P}, \dot{\bar{\Q}}^{\barG}, \dot{\bar{\R}}^{\barG}) = G, \P, \Q, \R$ where $\R \in V[G]$ is $\dot{\R}^G$, $\sigma_1 `` \bar{H} \subseteq H$ and $\sigma_1 \hook \bar{\mu}^+ = \sigma \hook \bar{\mu}^+$. Note also that by condensation we have that $\barN = L_{\bar{\tau}}[\bar{A}]$ and hence we can ensure that $\sigma_1 \hook \barN: \barN \prec N$. Let us denote by $\sigma_2$ the embedding $\sigma_1 \hook \barN$ and let $\dot{\sigma_2}$ be a $\P * \dot{\Q}$-name for $\sigma_2$ in $V$.

Now $\barG$ and $\bar{K}$ are coded as subsets of $\bar{\mu}^+$ by assumption. Therefore it must be the case that in fact $\sigma_1 \hook \barG = \sigma \hook \barG$ and idem for $\bar{K}$ - note the subtlety here $\bar{K}$ is not in $\barN[\bar{G}]$ but is a subset of it. In particular $(p, t)$ is still a lower bound in $\sigma_1 `` \barG * \bar{K}$. But putting all of these observations together now ensures that the triple $(p, \dot{q}, t) \in \P * \dot{\Q} * \dot{\T}$ forces that $\dot{\sigma}_2$ is as needed to witness that the three step is $\infty$-subcomplete above $\mu^+$ as needed.
\end{proof}

Before moving to our main application let us give another one at the level of $\omega_2$.

\begin{theorem}
Assume $\infty$-$\SCFA \hook \omega_2$. The forcing $\mathbb{S}_{\omega_2}$ to add an $\omega_2$-Souslin tree preserves $\infty$-$\SCFA \hook \omega_2$. 
\end{theorem}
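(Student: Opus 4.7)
The plan is to apply the metatheorem, Theorem \ref{meta}, with $\mu = \omega_1$. Take $\mathbb{S}_{\omega_2}$ in the standard formulation in which conditions are ``nice'' trees of height $<\omega_2$ whose underlying sets are ordinals $<\omega_2$, ordered by end-extension; both individual conditions and any generic are naturally coded by subsets of $\omega_2$. I take $\dot{\R}$ to be the canonical $\mathbb{S}_{\omega_2}$-name $\dot{T}$ for the generic tree viewed as a forcing notion, so that a $\dot{T}$-generic is a cofinal branch through the tree. Since the underlying set of the generic tree is $\omega_2$, the conditions of $\dot{T}$ are ordinals $<\omega_2$ and in particular $\forces \dot{T} \subseteq V$, and any $\dot{T}$-generic branch is coded as a subset of $\omega_2$. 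The crucial structural fact is the classical one that $\mathbb{S}_{\omega_2} * \dot{T}$ has a $\sigma$-closed dense subset, consisting of those pairs $(s, \check{x})$ for which $x$ is a node of $s$ at the top level of $s$: a decreasing $\omega$-sequence of such pairs yields a tree whose height is the supremum of the relevant heights, and the chain of $x_n$'s converges to a branch that can be adjoined as a new top-level node.

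With these items verified, Theorem \ref{meta} yields that $\mathbb{S}_{\omega_2} * \dot{\Q} * \dot{T}$ is $\infty$-subcomplete above $\omega_2$ for any $\mathbb{S}_{\omega_2}$-name $\dot{\Q}$ for a forcing that is $\infty$-subcomplete above $\omega_2$. To conclude via Theorem \ref{Seansthm} I still need clause (2), which exactly mirrors the corresponding step in the proof of Theorem \ref{addasquare}. Given a generic $j:V \to N$ with ${\rm crit}(j) = \omega_2^V$, sufficiently large $\theta$, and $G * H * K \in N$ that is $(H_\theta^V, \mathbb{S}_{\omega_2} * \dot{\Q} * \dot{T})$-generic, I first note that $j`` G = G$ since $G$ is coded as a subset of $\omega_2^V = {\rm crit}(j)$. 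The tree $T := \bigcup G$ has height $\omega_2^V$ in $N$, and $K$ is a cofinal branch through $T$ there. Exactly as in the lower-bound construction that witnesses $\sigma$-closedness of the dense subset, the tree obtained from $T$ by adjoining $K$ as a single new node at level $\omega_2^V$ is a condition of $j(\mathbb{S}_{\omega_2})$ that end-extends every $s \in G$, giving the required lower bound.

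The main obstacle is the careful verification, for the chosen definition of $\mathbb{S}_{\omega_2}$, both that (i) the $\sigma$-closed dense subset above really has the claimed form, and (ii) the ``$T$ together with $K$'' construction is a genuine condition of $j(\mathbb{S}_{\omega_2})$. For the commonly used presentations of $\mathbb{S}_{\omega_2}$ (where conditions are trees of successor height together with a designated set of cofinal branches through the lower levels, or where top-level normality is not imposed) both are routine, so the argument reduces to checking that the formulations align. Combining Theorem \ref{meta} and Theorem \ref{Seansthm} then gives the desired preservation of $\infty$-$\SCFA \hook \omega_2$.
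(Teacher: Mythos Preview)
Your proposal is correct and follows essentially the same route as the paper's own argument: take $\dot{\R}$ to be the generic tree viewed as a branch forcing, invoke Theorem~\ref{meta} (with $\mu=\omega_1$) to get $\infty$-subcompleteness above $\omega_2$ of the three-step, and verify clause~(2) of Theorem~\ref{Seansthm} by adjoining the generic branch $K$ as a top node to obtain a lower bound for $j``G=G$. The paper's sketch is terser but identical in structure; your additional remarks on the $\sigma$-closed dense subset and on the coding of conditions by subsets of $\omega_2$ simply spell out what the paper leaves implicit.
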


\begin{proof}[Sketch]
Let $\mathbb{S}_{\omega_2}$ be the standard forcing to add an $\omega_2$-Souslin tree: conditions are binary trees $p \subseteq 2^{<\omega_2}$ of size $< \aleph_2$ ordered by end extension. This adds an $\omega_2$-Souslin tree and is $\sigma$-closed. Let $\dot{T}_{\dot{G}}$ be the canonical name for the tree added i.e. if $G\subseteq \mathbb{S}_{\omega_2}$ is generic over $V$ then $(\dot{T}_{\dot{G}})^G = \bigcup G$. Let $\dot{\Q}$ be a $\mathbb{S}_{\omega_2}$-name for a forcing notion which is $\infty$-subcomplete above $\omega_2$. As before it is enough to show that $\mathbb{S}_{\omega_2} * \dot{\Q} * \dot{T}_{\dot{G}}$ is $\infty$-subcomplete above $\omega_2$ where $\dot{T}_{\dot{G}}$ is the name for the tree as a forcing notion, by essentially the same proof as in the case of Theorem \ref{addasquare}. However that this three step is $\infty$-subcomplete above $\omega_2$ now follows almost immediately from Theorem \ref{meta}. 
\end{proof}

We have the following corollary similar to Corollary \ref{separate1} above by invoking a model of $\infty$-$\SCFA + 2^{\aleph_0} = \aleph_2$.

\begin{corollary}
Assuming the consistency of a supercompact cardinal we have the consistency of $\SCFA + \neg \CH + \neg {\rm TP}(\omega_2)$.
\end{corollary}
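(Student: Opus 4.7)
The plan is to proceed in complete parallel with Corollary \ref{separate1}, substituting the preceding theorem (on adding an $\omega_2$-Souslin tree) for Theorem \ref{addasquare}. First, I would fix a ground model $V$ in which $\infty$-$\SCFA$ holds together with $2^{\aleph_0} = 2^{\aleph_1} = \aleph_2$; such a $V$ is produced from the assumed supercompact by the standard iteration yielding $\MM$, which itself implies both $\infty$-$\SCFA$ and the required cardinal arithmetic. The role of the equality $2^{\aleph_1} = \aleph_2$ is twofold: it makes $\mathbb{S}_{\omega_2}$ a forcing of size $\aleph_2$ that preserves all cardinals (by the standard strategic-closure analysis for this poset), and it guarantees that in the extension the parameter $2^{\aleph_0}$ still equals $\omega_2$, so that $\infty$-$\SCFA \hook \omega_2$ and $\infty$-$\SCFA$ coincide there.

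Next, I would force over $V$ with $\mathbb{S}_{\omega_2}$ to obtain $V[G]$ and check the three conjuncts. For $\neg \CH$: the forcing $\mathbb{S}_{\omega_2}$ is $\sigma$-closed, so no reals are added and $2^{\aleph_0} = \aleph_2$ in $V[G]$. For $\neg \mathrm{TP}(\omega_2)$: the generic tree $T_G := \bigcup G$ is an $\omega_2$-Souslin tree of $V[G]$, hence in particular $\omega_2$-Aronszajn (height $\omega_2$, levels of size at most $\aleph_1$, no cofinal branch), so the tree property at $\omega_2$ fails. For $\SCFA$: the preceding theorem yields that $\infty$-$\SCFA \hook \omega_2$ holds in $V[G]$; by the arithmetic just noted this coincides with $\infty$-$\SCFA \hook 2^{\aleph_0}$, which is $\infty$-$\SCFA$; and since every subcomplete forcing is $\infty$-subcomplete, $\infty$-$\SCFA$ implies $\SCFA$, giving the conclusion.

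Virtually all of the real work is encapsulated in the preceding preservation theorem. The one routine point needing a small amount of care is the verification that $\mathbb{S}_{\omega_2}$ preserves $\omega_2$ in $V$, which is what makes the generic tree an honest $\omega_2$-sized Aronszajn tree rather than one of collapsed height; this is precisely where the ground-model hypothesis $2^{\aleph_1} = \aleph_2$ enters. Beyond this bookkeeping, no additional obstacle is anticipated.
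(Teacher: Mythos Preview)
Your proposal is correct and follows essentially the same approach as the paper, which simply remarks that the corollary is proved ``similar to Corollary~\ref{separate1} above by invoking a model of $\infty$-$\SCFA + 2^{\aleph_0} = \aleph_2$.'' You have in fact spelled out the details more carefully than the paper does, in particular the role of $2^{\aleph_1} = \aleph_2$ in ensuring that $\mathbb{S}_{\omega_2}$ preserves $\omega_2$.
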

Here ${\rm TP}(\omega_2)$ is the tree property at $\omega_2$ i.e. no $\omega_2$-Aronszajn trees. This result contrasts with \cite[Corollary 4.1]{TPW} which shows that under Rado's Conjecture, another forcing axiom-like statement compatible with $\CH$, ${\rm TP}(\omega_2)$ is equivalent to $\neg \CH$. 

The proof of Theorem \ref{addasquare}, using Theorem \ref{meta} can be easily generalized to establish that for any cardinal $\mu$ adding a $\square_\mu$ sequence via $\P_0$ preserves $\infty$-$\SCFA\hook \mu^+$.

\begin{theorem}
Let $\mu$ be an uncountable cardinal and assume $\infty$-$\SCFA \hook \mu^+$ holds. If $\P_0$ is the forcing from the previous subsection to add a $\square_{\mu}$-sequence then $\P_0$ preserves $\infty$-$\SCFA \hook \mu^+$. \label{addabiggersquare}
\end{theorem}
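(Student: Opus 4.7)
The plan is to mimic the proof of Theorem \ref{addasquare} essentially verbatim, with $\mu$ in place of $\omega_1$, invoking Theorem \ref{meta} to handle $\infty$-subcompleteness above $\mu^+$ of the relevant three-step iteration, and then verifying clause (2) of Theorem \ref{Seansthm} to conclude via Cox's theorem. First I would set $\dot{\T} := \dot{\T}_{\dot{G}, \aleph_1}$, the forcing that threads the generic $\square_\mu$-sequence with countable closed subsets of $\mu^+$, and note that (by the Fact cited in the preceding subsection, with $\gamma = \aleph_1$) $\P_0 * \dot{\T}$ has a $\sigma$-closed dense subset.

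Next I would verify the hypotheses of Theorem \ref{meta}. Conditions of $\P_0$ are partial functions whose domains are bounded subsets of $\mu^+ \cap \mathrm{Lim}$, with values clubs of order type $\leq \mu$, so each such condition (and hence any generic $G$, via $\bigcup G$) is naturally coded as a subset of $\mu^+$; conditions of $\dot{\T}$ are directly closed bounded subsets of $\mu^+$. The $\sigma$-closed dense subset of $\P_0 * \dot{\T}$ is provided by the Fact, and $\forces_{\P_0}\,\dot{\T} \subseteq V$ follows from the ${<}\mu^+$-strategic closure of $\P_0$, which in particular guarantees that no new countable subsets of $\mu^+$ are added, exactly as in the footnote to Theorem \ref{meta}. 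Applying Theorem \ref{meta} then yields that for every $\P_0$-name $\dot{\Q}$ for a forcing notion $\infty$-subcomplete above $\mu^+$, the three-step iteration $\P_0 * \dot{\Q} * \dot{\T}$ is $\infty$-subcomplete above $\mu^+$.

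The principal remaining task, and the main obstacle, is the verification of clause (2) of Theorem \ref{Seansthm}: given a generic elementary embedding $j: V \to N$ satisfying (a)--(d) of that theorem for $\P_0 * \dot{\Q} * \dot{\T}$, produce a lower bound of $j`` G$ in $j(\P_0)$. For $\mu = \omega_1$ this was immediate in Claim \ref{thread_lowerbound} because $G$ was coded below $\mathrm{crit}(j)$, whence $j`` G = G$; but for $\mu > \omega_1$ the conditions in $G$ have domains extending beyond $\mathrm{crit}(j) = \omega_2^V$ and $j$ genuinely moves them. The remedy is to set $\beta^* := \sup(j`` \mu^+)$, which by hypothesis (b) has cofinality $\omega_1$ in $N$, to observe that $\bigcup (j`` G)$ is a partial $\square$-sequence with domain $\beta^* \cap \mathrm{Lim}$, and to let $c^*$ be the closure in $\beta^*$ of $\bigcup (j`` K)$. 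Appealing to the elementarity of $j$ together with the coherence property of $K$ with the generic $\square_\mu$-sequence, $c^*$ is a club in $\beta^*$ of order type $\omega_1 \leq j(\mu)$ that coheres with $\bigcup (j`` G)$, and adjoining the pair $(\beta^*, c^*)$ on top of $\bigcup (j`` G)$ yields the required lower bound in $j(\P_0)$. Theorem \ref{Seansthm} then delivers the desired preservation, with the principal technical point being the careful verification that $c^*$ really is a club in $\beta^*$ and that its coherence with $\bigcup (j`` G)$ is preserved under $j$, so that the triple produced is a genuine condition of $j(\P_0)$ extending every $j(p)$ for $p \in G$.
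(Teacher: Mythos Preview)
Your proposal is correct and follows essentially the same route as the paper. The paper likewise sets $\dot{\T}=\dot{\T}_{\dot G,\aleph_1}$, defers the $\infty$-subcompleteness above $\mu^+$ of $\P_0 * \dot{\Q} * \dot{\T}$ to Theorem \ref{meta}, and for clause (2) of Theorem \ref{Seansthm} uses $K^* := j``\bigcup K$ as a club in $\beta^* := \sup(j``(\mu^+)^V)$ cohering with $j``G$, so that $(\bigcup j``G)\cup\langle\beta^*,K^*\rangle$ is the desired lower bound. One cosmetic remark: your $\bigcup(j``K)$ equals $j``\bigcup K$ (each $c\in K$ is countable, so $j(c)=j``c$), and this set is already closed below $\beta^*$, so taking its closure is unnecessary; the paper simply writes $K^*=j``\bigcup K$ and notes that the countability of all initial segments of $\bigcup K$ is what makes this work.
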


\begin{proof}
In $V^{\P_0}$ let $\dot{\T} := \dot{\T}_{\dot{G},\aleph_1}$.
We only give the proof of the claim obtained from Claim \ref{thread_lowerbound} by replacing $\omega_2$ with $\mu$. The other part of the proof - that the requisite three step forcing is $\infty$-subcomplete above $\mu^+$ is an immediate consequence of Theorem \ref{meta}.

Suppose $j : V \to N$, $\theta$ and $G * H * K$ are as in the proof of Claim \ref{thread_lowerbound}. Let $\beta := (\mu^+)^V = \sup_{p \in G} {\rm dom} (p)$. Then $\bigcup K \in N$ is a club subset of $\beta$ and coheres with all of the elements of $G$. Note that all initial segments of $\bigcup K$ are countable sets in $V$. So $K^* := j `` \bigcup K$ is club in $\beta^* := \sup (j `` \beta)$ and coheres with all of the elements of $G^* := j `` G$. Hence $(\bigcup G^*) \cup \langle \beta^* , K^* \rangle$ is a lower bound of $j `` G$ in $j(\P_0)$.
\end{proof}


Putting all of these results together we get the following.

\begin{theorem}
Let $2^{\aleph_0} \leq \nu \leq \kappa < \mu = \kappa^+$ be cardinals with $\nu^\omega < \mu$. Modulo the existence of a supercompact cardinal $\infty \mbox{-} \SCFA \hook \mu + \neg \infty \mbox{-} \SCFA \hook \nu$ is consistent.
\end{theorem}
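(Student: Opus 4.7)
The plan is to begin with a ground model $V$ satisfying $\infty$-$\SCFA$ together with the cardinal arithmetic $2^{\aleph_0} \leq \nu \leq \kappa < \mu = \kappa^+$ and $\nu^\omega < \mu$, and then force over $V$ with the poset $\P_0$ of Subsection 3.1 (with its parameter $\lambda$ taken to be $\kappa$) to add a $\square_\kappa$-sequence. Such a ground model $V$ is obtained from a supercompact cardinal by first arranging the desired cardinal arithmetic with a preliminary cardinal-preserving step (e.g.\ forcing GCH above $2^{\aleph_0}$) and then running the standard supercompact preparation used to force $\infty$-$\SCFA$. In $V$ both $\infty$-$\SCFA \hook \mu$ and $\infty$-$\SCFA \hook \nu$ hold as immediate consequences of $\infty$-$\SCFA$.

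The preservation half of the argument is an immediate appeal to Theorem \ref{addabiggersquare} with $\kappa$ playing the role of the $\mu$ in that theorem: since $\mu = \kappa^+$, the conclusion that $\P_0$ preserves $\infty$-$\SCFA \hook \kappa^+$ is precisely the preservation of $\infty$-$\SCFA \hook \mu$, so $\infty$-$\SCFA \hook \mu$ continues to hold in $V^{\P_0}$.

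To see that $\infty$-$\SCFA \hook \nu$ fails in $V^{\P_0}$, I would first verify that the hypotheses of Theorem \ref{failureofsquare} persist after forcing. Since $\P_0$ is $\sigma$-closed it adds no new reals and no new $\omega$-sequences of ordinals, so $2^{\aleph_0}$ is preserved and $\nu^\omega$ is computed identically in $V$ and $V^{\P_0}$; and since $\P_0$ is ${<}\kappa^+$-strategically closed, every cardinal $\leq \mu$ is preserved. Consequently the full cardinal arithmetic $2^{\aleph_0} \leq \nu \leq \kappa < \mu = \kappa^+$ together with $\nu^\omega < \mu$ still holds in $V^{\P_0}$. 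The $\P_0$-generic filter yields a $\square_\kappa$-sequence in $V^{\P_0}$, so Theorem \ref{failureofsquare} (with the same parameters $\nu, \kappa$) forces $\infty$-$\SCFA \hook \nu$ to fail in the extension.

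The only real subtlety I anticipate is at the setup step, where one must confirm that the standard supercompact-based construction of an $\infty$-$\SCFA$ model is compatible with (and can be preceded by forcing to arrange) the chosen cardinal arithmetic $2^{\aleph_0} \leq \nu \leq \kappa$ with $\nu^\omega < \mu = \kappa^+$. This is routine bookkeeping given the preservation properties of the standard preparation forcing, and once the ground model is in hand the two preceding theorems do all the heavy lifting.
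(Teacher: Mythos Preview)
Your proposal is correct and follows essentially the same route as the paper: invoke Theorem \ref{addabiggersquare} (with $\kappa$ in place of its parameter $\mu$) to preserve $\infty$-$\SCFA \hook \mu$ while forcing $\square_\kappa$, and then apply Theorem \ref{failureofsquare} to conclude that $\infty$-$\SCFA \hook \nu$ fails. You supply more detail than the paper does about preservation of the cardinal arithmetic hypotheses under $\P_0$, but the argument is the same.
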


\begin{proof}
By Theorem \ref{addabiggersquare} we know that $\infty$-$\SCFA \hook \mu$ is consistent with $\square_\kappa$ hence it suffices to see that $\infty$-$\SCFA \hook \nu$ implies the failure of $\square_\kappa$, but this is exactly the content of Theorem \ref{failureofsquare} above. 
\end{proof}

\section{Separating $\MM$ from $\SubPFA$}

In this section we prove the following result.

\begin{theorem}
    Assume there is a supercompact cardinal. Then there is a forcing extension in which $\infty$-$\SubPFA$ holds but $\MM$ fails. In particular, modulo the large cardinal assumption, $\infty$-$\SubPFA$ does not imply $\MM$. \label{notMM}
\end{theorem}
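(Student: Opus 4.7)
The plan is to adapt a classical argument of Beaudoin \cite{Beaudoin}. Start in a model $V$ of $\MM$ (which exists modulo a supercompact and gives $\infty$-$\SubPFA$ as well as $2^{\aleph_0} = \aleph_2$), and let $\P$ be the standard forcing to add a non-reflecting stationary set $S \subseteq \omega_2 \cap {\rm cof}(\omega)$: conditions are characteristic functions $p : \alpha + 1 \to 2$ with $\alpha \in \omega_2 \cap {\rm cof}(\omega)$ satisfying that $p^{-1}(1) \cap \beta$ is non-stationary in $\beta$ for every $\beta \leq \alpha$ of cofinality $\omega_1$, ordered by end extension. Routine arguments show $\P$ is $\sigma$-closed (so adds no reals and preserves $2^{\aleph_0} = \aleph_2$) and preserves cardinals. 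Since $\MM$ implies reflection of every stationary subset of $\omega_2 \cap {\rm cof}(\omega)$, the generic $S$ kills $\MM$ in $V^\P$.

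To see $\P$ preserves $\infty$-$\SubPFA$, I would apply Theorem \ref{Seansthm} with $\Gamma$ the class of $\infty$-subproper forcings. Given a $\P$-name $\dot{\Q}$ for an $\infty$-subproper forcing, let $\dot{\R}$ be the $\P * \dot{\Q}$-name for the forcing in $V^{\P * \dot{\Q}}$ that shoots a club through $\omega_2 \setminus S$ by closed bounded conditions ordered by end extension. The classical observation, essentially due to Beaudoin, is that $\P * \dot{\R}$ has a $\sigma$-closed dense subset---namely the set of pairs $(p, c)$ where $c \in V$ is an actual closed bounded subset of ${\rm dom}(p)$ disjoint from $p^{-1}(1)$, ordered componentwise by end extension---and that the elements of $\dot{\R}$ are forced to lie in $V$.

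From these two facts I claim that $\P * \dot{\Q} * \dot{\R}$ is $\infty$-subproper, equivalently $\infty$-subproper above $\omega_2$ since $2^{\aleph_0} = \aleph_2$. This follows from a subproper analog of Theorem \ref{meta}, proved by exactly the same argument with subcompleteness replaced by subproperness and an extra tracked condition $p \in \P$ that must be extended. This is the main technical obstacle; no essentially new ideas are needed beyond translating the proof of Theorem \ref{meta} into the subproper setting while keeping track of the given condition in $\P$ throughout the factoring, lifting, and application of the subversion property of $\dot{\Q}$.

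Finally, for clause (2) of Theorem \ref{Seansthm}, suppose $j : V \to N$ is a generic embedding with ${\rm crit}(j) = \omega_2^V$ and $G * H * K \in N$ is an $(H^V_\theta, \P * \dot{\Q} * \dot{\R})$-generic. Since conditions of $\P$ are bounded subsets of $\omega_2^V$ we have $j`` G = G$, so it suffices to find a lower bound of $G$ in $j(\P)$. The club $C := \bigcup K$ is closed unbounded in $\omega_2^V$ and disjoint from $S := \bigcup \{p^{-1}(1) : p \in G\}$, hence the function $p^* : \omega_2^V + 1 \to 2$ extending $\bigcup G$ by $p^*(\omega_2^V) = 0$ is a valid condition of $j(\P)$---the only new non-stationarity requirement, namely at $\beta = \omega_2^V$ which has cofinality $\omega_1$ in $N$, is witnessed by $C$---and is a lower bound of $j`` G$. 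Applying Theorem \ref{Seansthm} then yields $\forces_\P \infty$-$\SubPFA$, completing the proof.
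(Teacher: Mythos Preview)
Your proposal has a real gap at the point where you invoke ``a subproper analog of Theorem \ref{meta}, proved by exactly the same argument.'' The factoring trick in Theorem \ref{meta} works because $\dot{\R}$ is a $\P$-name whose conditions all lie in $V$; this lets one write $\P * \dot{\Q} * \dot{\R} \cong \P * \dot{\R} * \dot{\Q}$ and exploit the $\sigma$-closed dense subset of $\P * \dot{\R}$. In the subcomplete setting this is available because $\dot{\Q}$ adds no new countable sequences of ordinals (Lemma \ref{newseqs}(2)), so the club-shooting poset is literally the same whether computed in $V^{\P}$ or $V^{\P * \dot{\Q}}$. But here $\dot{\Q}$ is only $\infty$-subproper, and such a $\Q$ can add new countable closed subsets of $\omega_2 \setminus S$; hence your two claims ``$\dot{\R}$ is the $\P * \dot{\Q}$-name for the club-shooting forcing in $V^{\P * \dot{\Q}}$'' and ``the elements of $\dot{\R}$ are forced to lie in $V$'' are incompatible. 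If you retreat to $\dot{\R}$ computed in $V^{\P}$ the hypotheses of Theorem \ref{meta} are formally satisfied, but the subproper conclusion you need is that $(\sigma')^{-1}(K)$ is generic over $\barN[\barG][\barH]$, not merely over $\barN[\barG]$; the product-factoring argument does not deliver this, because $\barH$ was not available when $\bar{K}$ was chosen.

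The paper proceeds differently and the difference is flagged as ``crucial'': $\dot{\R}$ is taken as a genuine $\P * \dot{\Q}$-name (computed in $V^{\P * \dot{\Q}}$), and the three-step is shown $\infty$-subproper by a direct argument rather than via Theorem \ref{meta}. One first uses $\sigma$-closure of $\P$ to find $\barG$ and a lower bound $p$ with the extra feature $p(\alpha) = 0$ where $\alpha = \sup(\sigma``\bar\omega_2)$; then applies $\infty$-subproperness of $\Q$ to obtain $\sigma'$ and $\barH$; and only \emph{then} chooses $\bar{K}$ generic over $\barN[\barG][\barH]$. The lower bound on the $\R$-side is $r = \bigcup \tilde\sigma'\,``\bar{K} \cup \{\alpha\}$, which is a legitimate condition in $\R^{V[G][H]}$ precisely because $\alpha \notin S_G$ was arranged in advance. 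This explicit construction of $r$ using the structure of club-shooting, together with the ordering $\P \to \Q \to \R$ (not $\P \to \R \to \Q$), is the missing idea in your outline.
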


The idea behind this theorem is a combination of the proof technique from \cite[Theorem 2.6]{Beaudoin} and the proof of Theorem \ref{addasquare}. Starting from a model of $\MM$ we will force to add a non-reflecting stationary set to $2^{\aleph_0}$ ($=\aleph_2$ since $\MM$ holds). This kills $\MM$ by the results of \cite{FMS} but will preserve $\infty$-$\SubPFA$ by an argument similar to that of \cite[Theorem 2.6]{Beaudoin}. In that paper Beaudoin proves that in fact $\PFA$ is consistent with a nonreflecting stationary subset of any regular cardinal $\kappa$. The interesting difference in the subproper case is that $\infty$-$\SubPFA$ (in fact $\SCFA$) implies that there are no non-reflecting stationary subsets of any cardinal greater than the size of the continuum, see Theorem \ref{failureofsquare} above. In short, $\PFA$ is consistent with a nonreflecting stationary subset of every regular cardinal $\kappa$ while $\infty$-$\SubPFA$ is only consistent with a nonreflecting stationary subset of $\omega_2$. We begin by recalling the relevant definitions.

\begin{definition}
    Let $\kappa$ be a cardinal of uncountable cofinality and $S \subseteq \kappa$. For a limit ordinal $\alpha < \kappa$ of uncountable cofinality we say that $S$ {\em reflects to} $\alpha$ if $S \cap \alpha$ is stationary in $\alpha$. We say that $S$ is {\em non-reflecting} if it does not reflect to any $\alpha < \kappa$ of uncountable cofinality. 
\end{definition}

\begin{fact}[See Theorem 9 \cite{FMS}]
    $\MM$ implies that for every regular $\kappa > \aleph_1$ every stationary subset of $\kappa \cap {\rm Cof}(\omega)$ reflects.\label{MMstat}
\end{fact}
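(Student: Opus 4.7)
The plan is to prove the statement by contradiction. Fix a regular $\kappa > \aleph_1$ and suppose toward contradiction that $S \subseteq \kappa \cap {\rm Cof}(\omega)$ is stationary but does not reflect to any ordinal of uncountable cofinality below $\kappa$. The natural object to attack $S$ with is the ``climbing'' poset $\P_S$ whose conditions are countable closed subsets of $S$ (equivalently, continuous strictly increasing sequences $\langle \alpha_\xi : \xi \leq \gamma \rangle \subseteq S$ with $\gamma < \omega_1$), ordered by end-extension. A sufficiently generic filter in $\P_S$ produces a closed unbounded $\omega_1$-sequence in $S$ whose supremum will be a reflection point, so the task reduces to arguing that $\MM$ is applicable to $\P_S$.

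First I would verify that $\P_S$ is stationary-set preserving. Given stationary $T \subseteq \omega_1$, a condition $p_0$, and a $\P_S$-name $\dot C$ for a club in $\omega_1$, I would pick a sufficiently large regular $\theta$ and a countable elementary $M \prec H_\theta$ containing the relevant parameters, with $M \cap \omega_1 \in T$ and -- crucially -- with $\delta := \sup(M \cap \kappa) \in S$. The set of such $M$ is stationary in $[H_\theta]^{\omega}$: otherwise a club of countable elementary submodels disjoint from this set would project to a club in $\kappa$ disjoint from $S$, contradicting stationarity of $S$. Enumerate the dense subsets of $\P_S$ lying in $M$ and build a descending chain $p_0 \geq p_1 \geq \cdots$ inside $M$ meeting them, arranging $\max p_n \nearrow \delta$. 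Because $\delta \in S$, the union $q := \bigcup_n p_n \cup \{\delta\}$ is a legitimate condition in $\P_S$ and is a master condition for $M$, forcing $M \cap \omega_1 \in \dot C$.

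Next I would invoke $\MM$. For each $\xi < \omega_1$ let $D_\xi = \{p \in \P_S : {\rm ot}(p) \geq \xi\}$; a variant of the submodel argument above (again landing $\sup(M \cap \kappa)$ inside $S$) shows each $D_\xi$ is dense. $\MM$ then yields a filter $G \subseteq \P_S$ meeting every $D_\xi$. Setting $C := \bigcup G$ gives a continuous strictly increasing $\omega_1$-sequence inside $S$ with supremum $\delta$; since $|C| = \aleph_1 < \kappa$ and $\kappa$ is regular we have $\delta < \kappa$, and since ${\rm cf}(\delta) = \omega_1$ with $C$ a club in $\delta$ contained in $S$, the set $S \cap \delta$ is stationary (indeed contains a club) in $\delta$. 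This contradicts the assumed non-reflection of $S$.

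The main obstacle is establishing stationary-set preservation of $\P_S$. Everything hinges on being able to choose a countable elementary submodel $M$ with $\sup(M \cap \kappa) \in S$; that is what makes a master condition exist inside $\P_S$. The projection argument that such $M$'s are stationary in $[H_\theta]^{\omega}$ is standard but it is the essential use of the hypothesis $S \subseteq {\rm Cof}(\omega)$: the supremum of a countable elementary submodel automatically has countable cofinality, so the same scheme would be hopeless for stationary subsets concentrating on ordinals of uncountable cofinality.
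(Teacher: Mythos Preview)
The paper does not prove this fact; it merely cites \cite{FMS}. Your overall strategy---shooting a closed $\omega_1$-sequence into $S$ via the poset of countable closed subsets of $S$, verifying that this poset preserves stationary subsets of $\omega_1$, and then applying $\MM$---is exactly the classical Foreman--Magidor--Shelah argument and is correct in outline.

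There is, however, a genuine gap in your justification that
\[
\{\,M \prec H_\theta : M\cap\omega_1\in T \text{ and } \sup(M\cap\kappa)\in S\,\}
\]
is stationary. Your projection argument (``otherwise a club disjoint from this set would project to a club in $\kappa$ disjoint from $S$'') only establishes that $\{M:\sup(M\cap\kappa)\in S\}$ is stationary; it says nothing about the simultaneous requirement $M\cap\omega_1\in T$. A club $D$ avoiding the intersection may contain many $M$ with $\sup(M\cap\kappa)\in S$, provided those particular $M$ also have $M\cap\omega_1\notin T$, so no club in $\kappa$ disjoint from $S$ is produced. Two stationary subsets of $[H_\theta]^\omega$ need not intersect stationarily, so more is required.

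The claim is nonetheless true, and your argument can be repaired. One standard fix: first build a continuous $\in$-chain $\langle N_\alpha:\alpha<\kappa\rangle$ of elementary submodels of $H_\theta$ of size ${<}\kappa$ with $\omega_1\cup\{F,T,S\}\subseteq N_0$ and $N_\alpha\cap\kappa\in\kappa$. Since $\{N_\alpha\cap\kappa:\alpha<\kappa\}$ is club in $\kappa$, there is $\alpha$ with $\delta:=N_\alpha\cap\kappa\in S$. As $\mathrm{cf}(\delta)=\omega$, fix a cofinal $\omega$-sequence in $\delta$ and place it in the first model of a continuous $\omega_1$-chain $\langle M_i:i<\omega_1\rangle$ of countable elementary submodels lying inside $N_\alpha$. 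Then $\sup(M_i\cap\kappa)=\delta\in S$ for every $i$, while $\{M_i\cap\omega_1:i<\omega_1\}$ is club in $\omega_1$ and hence meets $T$. With this correction the rest of your proof goes through unchanged.
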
 

Compare this with the following, which was also noted in the proof of Theorem \ref{failureofsquare} above. 

\begin{fact}[See Lemma 6, Section 4 of \cite{Jen14}]
    $\SCFA$ implies that for every regular $\kappa > 2^{\aleph_0}$ every stationary subset of $\kappa \cap {\rm Cof}(\omega)$ reflects.
\end{fact}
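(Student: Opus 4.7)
Assume toward contradiction that $\kappa > 2^{\aleph_0}$ is regular and $S \subseteq \kappa \cap {\rm Cof}(\omega)$ is stationary with no reflection point, i.e.\ $S \cap \alpha$ is non-stationary in $\alpha$ for every $\alpha < \kappa$ of uncountable cofinality. The plan is to introduce the subcomplete forcing $\P_S$ from Lemma 6.3 of Section 3.3 of \cite{Jen14} and apply $\SCFA$ to it in order to manufacture a reflection point in $V$, yielding the desired contradiction.

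Explicitly, let $\P_S$ consist of countable continuous increasing sequences $p : \alpha + 1 \to \kappa$ with $\alpha < \omega_1$ and $p(\beta) \in S$ whenever $\beta \leq \alpha$ is a countable limit ordinal, ordered by reverse end-extension. For each $\beta < \omega_1$ the set $D_\beta := \{p \in \P_S \mid \beta \in {\rm dom}(p)\}$ is dense: successor extensions are trivial since $S$ is unbounded in $\kappa$, and extensions past a countable limit can be produced using the stationarity of $S$ (combined with the subcompleteness-based flexibility discussed in the next paragraph). Applying $\SCFA$ to $\P_S$ together with the family $\{D_\beta \mid \beta < \omega_1\}$ yields a filter $G \subseteq \P_S$ meeting every $D_\beta$, whose union $f := \bigcup G$ is a continuous increasing map $f : \omega_1 \to \kappa$ with $f(\beta) \in S$ for every countable limit $\beta$. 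Setting $\alpha^* := \sup {\rm ran}(f) \leq \kappa$, if $\alpha^* = \kappa$ then ${\rm cof}(\kappa) = \omega_1$, contradicting that $\kappa > \omega_1$ is regular; otherwise ${\rm cof}(\alpha^*) = \omega_1$ and $\{f(\beta) \mid \beta < \omega_1 \text{ a countable limit}\}$ is a club in $\alpha^*$ contained in $S$, contradicting the non-reflection of $S$ at $\alpha^*$.

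The technical heart of the argument, and the main obstacle, is verifying that $\P_S$ is subcomplete. Following \cite[Chapter 3, Section 3]{Jen14}, suppose $\sigma : \barN \prec N = L_\tau[A]$ is countable, transitive and full with $\sigma(\bar{\P}_S, \bar{S}, \bar{\kappa}) = \P_S, S, \kappa$, and let $\barG \subseteq \bar{\P}_S$ be $\barN$-generic. Setting $\delta := \sup \sigma``\barG$, the ordinal $\delta$ has countable cofinality. The goal is to select a suitable $\gamma \in S$ just above $\delta$ and let $p \in \P_S$ be the sequence obtained by extending $\bigcup \sigma``\barG$ by the new top element $\gamma$; then in any generic extension by $p$ one builds an alternate embedding $\sigma' : \barN \prec N$ sending $\barG$ into the generic and arranging the Hulls clause via the canonical definable well-order of $L_\tau[A]$. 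The hypothesis $\kappa > 2^{\aleph_0}$ is precisely what provides enough freedom in the choice of $\gamma \in S$ above $\delta$ to satisfy the Hulls condition --- this is exactly the point at which the original argument in \cite{JensenSPSC} omitted a needed hypothesis, subsequently corrected in the literature (see \cite{FuchsERR}). Moreover the threshold cannot be lowered: if $\kappa \leq 2^{\aleph_0}$ the argument provably breaks, in line with Lemma \ref{newseqs}, because $\P_S$ would otherwise be a subcomplete forcing adding new countable sequences of ordinals below $2^{\aleph_0}$, which is impossible.
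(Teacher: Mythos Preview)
The paper does not prove this Fact; it merely cites Jensen. Your sketch follows the same route: apply $\SCFA$ to the forcing $\P_S$ of \cite[Lemma 6.3, Section 3.3]{Jen14}, and the substantive work is checking that $\P_S$ is subcomplete. So the overall strategy matches. Two points in your write-up deserve correction, though.

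First, your parenthetical about density (``combined with the subcompleteness-based flexibility discussed in the next paragraph'') conflates two separate issues. Density of $D_\beta$ is a ground-model fact and has nothing to do with subcompleteness. The actual argument for the limit case is an elementary-submodel argument: take countable $M\prec H_\theta$ containing the relevant data with $\gamma:=\sup(M\cap\kappa)\in S$ (possible exactly because $S$ is stationary), and dovetail extensions inside $M$ so that the resulting sequence is cofinal in $\gamma$; then cap it off with the value $\gamma$. This same argument shows $\P_S$ is $\sigma$-distributive.

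Second, and more seriously, your final sentence is wrong. Since $\P_S$ is $\sigma$-distributive it adds no new countable sequences of ordinals whatsoever; the generic object is an $\omega_1$-sequence, and all of its countable initial segments are conditions in $V$. So Lemma~\ref{newseqs} gives no obstruction to $\P_S$ being subcomplete when $\kappa\le 2^{\aleph_0}$. The genuine reason the threshold cannot be lowered is not this, but rather that the counting argument producing the alternate embedding $\sigma'$ (Jensen's lifting lemma, the locus of the error discussed in \cite{FuchsERR}) bounds the set of unusable targets by $2^{\aleph_0}$, and one needs $\kappa>2^{\aleph_0}$ for stationarity of $S$ to guarantee a usable $\gamma$. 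That the bound is sharp is witnessed not by Lemma~\ref{newseqs} but by Theorem~\ref{addasquare} of the present paper: $\SCFA$ is consistent with $\square_{\omega_1}$ (and hence with a non-reflecting stationary subset of $\omega_2\cap{\rm Cof}(\omega)$) when $2^{\aleph_0}=\aleph_2$. Your attribution of the role of $\kappa>2^{\aleph_0}$ to ``the Hulls condition'' is also imprecise: the cardinal-arithmetic hypothesis is needed already to find any $\sigma'$ at all, not merely to arrange clause~(3).
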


\begin{remark}
Again, in \cite{Jen14} it is claimed that $\SCFA$ implies that the above holds for all $\kappa > \aleph_1$, regardless of the size of the continuum. However, this too is incorrect without $\CH$ because of the error. 
\end{remark}

There is a natural forcing notion to add a non-reflecting stationary subset $S \subseteq \kappa \cap {\rm Cof}(\omega)$ for a fixed regular cardinal $\kappa$. The definition and basic properties are given in Example 6.5 of \cite{CummingsHB}. We record the basics here for reference.
\begin{definition}
Fix a regular cardinal $\kappa > \aleph_1$. The forcing notion $\mathbb{NR}_\kappa$ is defined as follows. Conditions are functions $p$ with domain the set of countably cofinal ordinals below some ordinal $\alpha < \kappa$ mapping into $2$ with the property that if $\beta \leq {\rm sup}({\rm dom}(p))$ has uncountable cofinality then there is a set $c \subseteq \beta$ club in $\beta$ which is disjoint from $p^{-1} (1) = \{ \alpha \in {\rm dom} (p) \mid p(\alpha) = 1 \}$. The extension relation is simply $q \leq_{\mathbb{NR}_\kappa} p$ if and only if $q \supseteq p$. 
\end{definition}

Proofs of the following can be found in \cite{CummingsHB}. 
\begin{proposition}
For any regular $\kappa > \aleph_1$ the forcing $\mathbb{NR}_\kappa$ has the following properties. 
\begin{enumerate}
    \item $\mathbb{NR}_\kappa$ is $\sigma$-closed.
    \item $\mathbb{NR}_\kappa$ is $\kappa$-strategically closed and in particular preserves cardinals.
    \item If $G\subseteq \mathbb{NR}_{\kappa}$ is generic then $S_G := \bigcup_{p \in G} p^{-1} (1)$ is a non-reflecting stationary subset of $\kappa$.
\end{enumerate}
\end{proposition}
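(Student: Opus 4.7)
The plan is to verify the three clauses in turn. For (1), given a descending $\omega$-chain $p_0 \geq p_1 \geq \cdots$, one takes $p := \bigcup_n p_n$ and, if $\alpha := \sup_n \sup({\rm dom}(p_n))$ is not already in ${\rm dom}(p)$, extends by setting $p(\alpha) = 0$. Because $\alpha$ is the supremum of a strictly increasing $\omega$-sequence it has cofinality $\omega$, so this extension is admissible. The non-reflection clause in the definition of a condition then need only be checked for uncountably cofinal $\beta \leq \alpha$; any such $\beta$ lies strictly below some $\sup({\rm dom}(p_n))$ and so inherits the required club witness from $p_n$ directly.

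For (2), I would describe a strategy for Empty that maintains the invariant that Empty's response at each round is a condition whose domain has a maximum element of cofinality $\omega$ (adjoining a top with value $0$ if necessary). At limit rounds of countable cofinality in the game, the argument from (1) applies verbatim. At limit rounds of uncountable cofinality $\delta$, Empty instead follows a pre-committed $\omega$-sequence of previously played top points cofinal in $\delta$, whose supremum is automatically countably cofinal and so may be adjoined to the domain with value $0$. Arranging this bookkeeping in advance for every possible limit ordinal of uncountable cofinality below $\kappa$ is the main technical ingredient; cardinal preservation up through $\kappa^+$ then follows from $\kappa$-strategic closure in the standard way.

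For (3), stationarity of $S_G$ in $V[G]$ is a density argument: given $p$ and a name $\dot{C}$ for a club in $\kappa$, using $\sigma$-closure one constructs a descending chain $p = p_0 \geq p_1 \geq \cdots$ together with ordinals $\alpha_n > \sup({\rm dom}(p_n))$ such that $p_{n+1} \Vdash \alpha_n \in \dot{C}$, and then extends the union by setting the value $1$ at $\alpha := \sup_n \alpha_n$; this is a valid condition (since $\alpha$ has cofinality $\omega$) and forces $\alpha \in \dot{C} \cap S_G$. Non-reflection is essentially built into the definition of $\mathbb{NR}_\kappa$: given $\alpha < \kappa$ of uncountable cofinality, density (using (2)) provides a condition $q$ with $\sup({\rm dom}(q)) \geq \alpha$, and by definition $q$ carries a club $c \subseteq \alpha$ in $V$ disjoint from $q^{-1}(1)$; this $c$ remains club in $V[G]$ and is disjoint from $S_G \cap \alpha$. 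I expect (2) to be the only step requiring substantive care.
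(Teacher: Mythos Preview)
The paper does not actually prove this proposition; it simply refers the reader to Cummings' Handbook chapter. So there is no in-paper argument to compare against, and I will just assess your sketch on its own merits.

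Your treatments of (1) and (3) are correct and standard.

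Your sketch of (2), however, contains a genuine error at the key step. At a limit round $\delta$ of uncountable cofinality you say Empty ``follows a pre-committed $\omega$-sequence of previously played top points cofinal in $\delta$, whose supremum is automatically countably cofinal.'' But if $\mathrm{cf}(\delta)>\omega$, no $\omega$-sequence can be cofinal in $\delta$; and assuming the domains strictly increase along the play, the supremum $\alpha$ of the domains of all plays below $\delta$ has cofinality $\mathrm{cf}(\delta)>\omega$ as well. So you cannot adjoin $\alpha$ to the domain (it is not countably cofinal), nor can you sidestep the issue by passing to an $\omega$-subsequence: any such subsequence has supremum strictly below $\alpha$ and therefore does not bound the full play.

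What is actually needed at such $\delta$ is to verify directly that the union $p:=\bigcup_{\xi<\delta}p_\xi$ is already a condition, which amounts to producing a club $c\subseteq\alpha$ disjoint from $p^{-1}(1)$. The correct witness is the set of \emph{all} of Empty's earlier top points, not an $\omega$-subsequence: provided Empty's strategy at every earlier limit round placed the new top exactly at the supremum of the preceding domains (with value $0$ when that supremum is countably cofinal), this set is closed and cofinal in $\alpha$, and every countably cofinal member of it has value $0$; since $p^{-1}(1)\subseteq\mathrm{Cof}(\omega)$, that suffices. You were right that (2) is where the substantive care lives; it is precisely this point.
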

We neglect to give the definition of strategic closure since we will not need it beyond the fact stated above, see \cite{cummings2001squares} or \cite{CummingsHB} for a definition. 

Let $\kappa$ be as above, $G\subseteq \mathbb{NR}_\kappa$ be generic over $V$ and let $S_G := \bigcup_{p \in G} p^{-1} (1)$ be the generic non-reflecting stationary set. We want to define a forcing to kill $S_G$ (this will be the ``$\dot{\R}$" in our application of Theorem \ref{Seansthm}). Specifically we will define a forcing notion $\Q_{S_G}$ so that forcing with $\Q_{S_G}$ will add a club to $\kappa \setminus S_G$ and hence kill the stationarity of $S_G$. Note that since $S_G$ is non-reflecting its complement must also be stationary and indeed has to be {\em fat}, i.e. contain continuous sequences of arbitrary length $\alpha < \kappa$ cofinally high.

\begin{definition}
    Borrowing the notation from the previous paragraph define the forcing notion $\Q_{S_G}$ as the set of closed, bounded subsets of $\kappa \setminus S_G$ ordered by end extension.
\end{definition}
Clearly the above forcing generically adds a club to the complement of $S_G$ thus killing its stationarity, see \cite{CummingsHB} Definition 6.10. It is also $\omega$-distributive. 

We are now ready to prove Theorem \ref{notMM}.

\begin{proof}[Proof of Theorem \ref{notMM}]
    Assume $\infty$-$\SubPFA$ holds (the consistency of this is the only application of the supercompact). Note that the continuum is $\aleph_2$ and will remain so in any cardinal preserving forcing extension which adds no reals. Let $\P = \mathbb{NR}_{\aleph_2}$, $G \subseteq \P$ be generic over $V$ and work in $V[G]$. Obviously in this model we have ``there is a non-reflecting stationary subset of $\aleph_2$" and thus $\MM$ fails by Fact \ref{MMstat}. We need to show that $\infty$-$\SubPFA$ holds. 

    We will apply Theorem \ref{Seansthm} much as in the proof of Theorem \ref{addasquare}. Let $\dot{\Q}$ be a $\P$-name for an $\infty$-subproper forcing notion and let $\dot{\R}$ name $\Q_{S_{\dot{G}}}$ in $V^{\P * \dot{\Q}}$ (NOT just in  $V^\P$ - this is different than the proof of Theorem \ref{addasquare} and crucial). By exactly the same argument as in the proof of Theorem \ref{addasquare} it suffices to show that $\P * \dot{\Q} * \dot{\R}$ is $\infty$-subproper (in $V$). This is because (2) from Theorem \ref{Seansthm} follows from the fact that, borrowing the notation from the statement of that theorem applied to our situation $\dot{\R}$ shoots a club through the complement of $S_G$ hence $j`` S_G = S_G$ is non-stationary in its supremum and so has a lower bound in $N$. 
    
    So we show that $\P * \dot{\Q} * \dot{\R}$ is $\infty$-subproper. This is very similar to the proof of Theorem \ref{addasquare} or even Theorem \ref{meta} more generally but enough details are different to warrant repeating everything for completeness. Let $\tau > \theta$ be sufficiently large cardinals and $\sigma:\barN \prec N = L_\tau[A] \supseteq H_\theta$ be as in the definition of $\infty$-subproperness. Let $\sigma (\bar{\P}, \dot{\bar{\Q}}, \dot{\bar{\R}}, \bar{\omega_2}) = \P, \dot{\Q}, \dot{\R}, \omega_2$. Let $(p_0, \dot{q}_0, \dot{r}_0)$ be a condition in $\P * \dot{\Q} * \dot{\R}$ with $\sigma(\bar{p}_0, \dot{\bar{q}}_0, \dot{\bar{r}}_0) = (p_0, \dot{q}_0, \dot{r}_0)$. Applying the $\sigma$-closure of $\P$ we can find a $\bar{\P}$-generic $\barG$ over $\barN$ and a condition $p \leq p_0$ so that $p$ is a lower bound on $\sigma `` \bar{G}$ and, letting $\alpha = {\rm sup}(\sigma ``\bar{\omega}_2)$, we have $p(\alpha) = 0$ (i.e. $p$ forces $\alpha$ to not be in the generic stationary set). Let us assume $p \in G$ and note that this condition forces $\sigma `` \barG \subseteq G$ and hence $\sigma$ lifts uniquely to a $\Tilde{\sigma}:\barN[\barG] \prec N[G]$ that $\Tilde{\sigma} (\barG) = G$ and $\alpha :={\rm sup}(\sigma ``\bar{\omega}_2) \notin S_G$. Let $\bar{\Q} = \dot{\bar{\Q}}^{\barG}$ as computed in $\barN[\barG]$ and let $\bar{q}_0 = \dot{\bar{q}}_0^{\bar{G}} \in \barN[\barG]$. Applying the fact that $\dot{\Q}$ is forced to be $\infty$-subproper let $q \leq q_0 = \Tilde{\sigma}(\bar{q}_0)$ be a condition forcing that if $H \subseteq \Q$ is $V$-generic with $q \in H$ then there is a $\sigma ' \in V[G][H]$ so that $\sigma':\bar{N}[\barG] \prec N[G]$ as in the definition of $\infty$-subproperness (with respect to $\Tilde{\sigma}$). Note that as in the proof of Theorem \ref{addasquare} $\sigma '\hook \barN:\barN \prec N$ and $\sigma ' \hook \bar{\omega}_2 = \sigma \hook \bar{\omega}_2$. Let $\Tilde{\sigma}' : \bar{N}[\bar{G}][\bar{H}] \to N[G][H]$ be the lift of $\sigma '$, where $\bar{H} = ( \sigma ' )^{-1} `` H$.

    \begin{claim}
    In $V[G][H]$ the set $S_G$ does not contain a club.
    \end{claim}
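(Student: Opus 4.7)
The plan is to argue by contradiction. Suppose toward a contradiction that in $V[G][H]$ there is a club $C \subseteq \omega_2^{V[G]}$ with $C \subseteq S_G$; the aim is to exhibit a $\beta \in C$ whose $V[G]$-cofinality must simultaneously equal $\omega$ (because $\beta \in S_G \subseteq \omega_2 \cap {\rm cof}(\omega)$ as computed in $V[G]$) and $\omega_1$.

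The main engine is Lemma~\ref{newseqs}(1). Since $V \models \MM$ forces $2^{\aleph_0} = \aleph_2$ and $\P = \mathbb{NR}_{\aleph_2}$ is $\sigma$-closed and hence adds no reals, one has $(2^{\aleph_0})^{V[G]} = \omega_2^{V[G]}$. The proposition equating $\infty$-subproperness with $\infty$-subproperness above $2^{\aleph_0}$ then shows that $\dot{\Q}$ is forced to be $\infty$-subproper above $\omega_2^{V[G]}$, so Lemma~\ref{newseqs}(1) yields, for every $\beta \leq \omega_2^{V[G]}$, the equivalence ${\rm cf}^{V[G][H]}(\beta) = \omega$ iff ${\rm cf}^{V[G]}(\beta) = \omega$. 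Since every $\beta < \omega_2^{V[G]}$ has $V[G]$-cofinality in $\{\omega, \omega_1\}$, this identifies $\omega_2^{V[G]} \cap {\rm cof}^{V[G]}(\omega_1)$ with $\omega_2^{V[G]} \cap {\rm cof}^{V[G][H]}(\omega_1)$.

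To finish the contradiction, I argue that $\omega_2^{V[G]}$ is preserved as a regular cardinal by $\Q$: a hypothetical new bijection $f \colon \omega_1 \to \omega_2^{V[G]}$ would, via the lift condition $\sigma' \hook \bar{\omega}_2 = \sigma \hook \bar{\omega}_2$ provided by $\infty$-subproperness above $\omega_2^{V[G]}$, force the values $f(\sigma(i)) = \sigma(\bar{f}(i))$ on the countable set $\sigma `` \bar{\omega}_1$ (where $\bar{f}$ is the pullback of $f$ under $\Tilde{\sigma}'$), pinning down too much of $f$ for the corresponding $\bar{H} = (\sigma')^{-1}{}`` H$ to be generic over $\bar{N}[\bar{G}]$. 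Granted this preservation, the club $C$ lives in a regular cardinal and so contains cofinally many points of $V[G][H]$-cofinality $\omega_1$; for any such $\beta \in C$, the cofinality preservation of the previous paragraph forces ${\rm cf}^{V[G]}(\beta) = \omega_1$, contradicting $\beta \in S_G \subseteq {\rm cof}^{V[G]}(\omega)$. The only delicate step is the preservation of $\omega_2^{V[G]}$ under $\Q$; once that is in hand, the remainder is routine cofinality bookkeeping.
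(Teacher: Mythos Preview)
Your argument has a genuine gap at exactly the point you flag as delicate: the preservation of $\omega_2^{V[G]}$ as a regular cardinal under $\Q$ is simply false in general. Any $\sigma$-closed forcing is complete and hence $\infty$-subproper above $2^{\aleph_0}$, and ${\rm Coll}(\omega_1,\omega_2)$ is a $\sigma$-closed forcing collapsing $\omega_2$ to $\omega_1$. The paper itself notes parenthetically, immediately before the claim, that $\omega_2^{V[G]}$ ``of course can be collapsed to $\omega_1$''. Your sketch of a preservation argument does not produce a contradiction: the agreement $\sigma' \hook \bar{\omega}_2 = \sigma \hook \bar{\omega}_2$ only pins down countably many values of a hypothetical surjection $f:\omega_1 \to \omega_2^{V[G]}$ (namely those at inputs in the countable set $\sigma``\bar{\omega}_1$), which is no obstruction whatsoever to the existence of $f$ or to the genericity of $\bar{H}$.

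Once $\omega_2^{V[G]}$ has cofinality $\omega_1$ in $V[G][H]$, your cofinality bookkeeping no longer yields a contradiction: a club of order type $\omega_1$ in an ordinal of cofinality $\omega_1$ can consist entirely of points of countable cofinality (its limit points are indexed by countable limit ordinals), so $C \subseteq S_G \subseteq {\rm Cof}^{V[G]}(\omega)$ is not ruled out. The paper's proof sidesteps this issue by exploiting the specific ordinal $\alpha = \sup(\sigma``\bar{\omega}_2)$, which was deliberately placed outside $S_G$ via the choice $p(\alpha) = 0$ made earlier in the construction. Pulling a hypothetical club $C \subseteq S_G$ back to $\bar{C}$ along $\tilde{\sigma}'$ and using $\sigma' \hook \bar{\omega}_2 = \sigma \hook \bar{\omega}_2$ shows that $\tilde{\sigma}'(\bar{C}) \cap \alpha$ is cofinal in $\alpha$, whence $\alpha \in C \subseteq S_G$, the desired contradiction. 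This elementarity argument is indifferent to whether $\omega_2^{V[G]}$ remains a cardinal.
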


    \begin{proof}[Proof of Claim]
        Since $\aleph_2$ is the continuum in $V[G]$ note that $\omega_2^{V[G]}$ remains uncountably cofinal in $V[G][H]$ (though of course it can be collapsed to $\omega_1$). Suppose towards a contradiction that $S_G$ contains a club and note that since we chose $\theta$ and $\tau$ to be sufficiently large with respect to the forcing (and therefore in particular we can assume $H_\theta$ contains the powerset of $\omega_2$) we have $N[G][H] \models$ ``$\exists C$ which is club and $C \subseteq S_G$". By elementarity there is a $\bar{C} \in \barN[\barG][\bar{H}]$ so that $$\barN[\barG][\bar{H}] \models \bar{C} \subseteq \bar{S}_G \; {\rm is\, club}$$ where $\bar{H} :=\sigma '{}^{-1}H$ is $\bar{\Q}$-generic over $\barN[\barG]$ by the definition of $\infty$-subcompleteness and the choice of $q$. But now note that if $C = \Tilde{\sigma}'(\bar{C})$ then $C \cap \alpha$ is cofinal in $\alpha$ by elementarity so $\alpha \in C$ but $\alpha \notin S_G$ which is a contradiction.
    \end{proof}
    
    Given the claim we know that $\omega_2^V \setminus S_G$ is a stationary set in $V[G][H]$ and hence $\R := \dot{\R}^{G * H}$ is the forcing to shoot a club through a stationary set. Let $\bar{\R} \in \barN[\barG][\bar{H}]$ be $\dot{\bar{\R}}^{\barG * \bar{H}}$. Note that for each $\beta \in \barN \cap \bar{\omega}_2$ it is dense (in $\barN[\barG][\bar{H}]$) that there is a condition $\bar{r} \in \bar{\R}$ with $\beta \in {\rm dom}(\bar{r})$. It follows that if $\bar{K}$ is generic for $\bar{\R}$ over $\barN[\barG][\bar{H}]$ with $\bar{K} \ni \bar{r}_0 : = \dot{\bar{r}}^{\barG * \bar{H}}$ then $\Tilde{\sigma '} ``\bar{K}$ unions to a club in $\alpha \setminus S_G$. Since $\alpha \notin S_G$ we have that $r:=\bigcup \Tilde{\sigma '} ``\bar{K} \cup \{\alpha\}$ is a condition in $\R$ which is a lower bound on $\Tilde{\sigma '} ``\bar{K}$ and hence $r \leq \dot{r}_0^{G * H}$. Finally let $K \ni r$ be $\R$-generic over $V[G][H]$. It is now easy to check that the condition $(p, \dot{q}, \dot{r})$ and $\sigma ' \hook \barN$ collectively witness the $\infty$-subproperness of $\P * \dot{\Q} * \dot{\R}$ so we are done.
\end{proof}

We note that by the same proof adding a nonreflecting stationary set of $\mu \cap {\rm Cof}(\omega)$ for larger cardinals $\mu$ we can preserve $\infty$-$\SubPFA \hook \mu$. The following therefore holds.

\begin{theorem}
Let $2^{\aleph_0} \leq \mu \leq \lambda < \nu = \lambda^+$ be cardinals with $\mu^\omega < \nu$. Modulo the existence of a supercompact cardinal $\infty$-$\SubPFA \hook \nu + \neg \infty$-$\SubPFA \hook \mu$ is consistent.
\end{theorem}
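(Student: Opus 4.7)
The plan is to imitate the proof of Theorem \ref{notMM}, substituting $\nu = \lambda^+$ for $\omega_2$ throughout. I would begin in a ground model $V$ of $\MM$ together with $\GCH$ above the continuum; such a model is obtained from a supercompact by first forcing $\GCH$ and then running the standard Baumgartner iteration. In $V$ we have $\infty$-$\SubPFA \hook \kappa$ for every cardinal $\kappa \geq 2^{\aleph_0}$, and the hypotheses $2^{\aleph_0} \leq \mu \leq \lambda < \nu = \lambda^+$ and $\mu^\omega < \nu$ can be arranged by an appropriate choice of $\mu, \lambda$. Now force with $\P := \mathbb{NR}_\nu$, which is $\sigma$-closed and $\nu$-strategically closed, so it adds no reals and preserves cardinals and cofinalities through $\nu$; in particular all the arithmetic hypotheses persist into $V[G]$, where $G$ is $\P$-generic.

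In $V[G]$ the axiom $\infty$-$\SubPFA \hook \mu$ fails: it would imply $\infty$-$\SCFA \hook \mu$, which by Theorem \ref{failureofsquare} (applied with the roles of its $\nu, \kappa, \mu$ played by our $\mu, \lambda, \nu$) precludes any nonreflecting stationary subset of $\lambda^+ \cap {\rm cof}(\omega) = \nu \cap {\rm cof}(\omega)$, directly contradicting the generic set $S_G$.

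To preserve $\infty$-$\SubPFA \hook \nu$ I apply Cox's Theorem \ref{Seansthm} to the class $\Gamma$ of forcings that are $\infty$-subproper above $\nu$. For any $\P$-name $\dot{\Q}$ for a member of $\Gamma$, let $\dot{\R}$ be the $\P * \dot{\Q}$-name for the club-shooting forcing $\Q_{S_{\dot{G}}}$ through $\nu \setminus S_G$. Clause $(2)$ of Theorem \ref{Seansthm} is verified exactly as in the $\omega_2$ case: $\dot{\R}$ forces $S_G$ to be nonstationary, so under any admissible generic embedding $j$ the set $j `` G = G$ has a lower bound in $j(\P)$ via the generic club. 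What remains is to show that the three-step $\P * \dot{\Q} * \dot{\R}$ is itself $\infty$-subproper above $\nu$, and the proof of Theorem \ref{notMM} transports line for line.

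Concretely, given $\sigma : \bar{N} \prec N$ as in the definition of $\infty$-subproperness above $\nu$, the countability of $\bar{N}$ forces $\alpha := \sup \sigma `` \bar{\nu}$ to have cofinality $\omega$, so $\alpha \in \nu \cap {\rm cof}(\omega)$; using the $\sigma$-closure of $\P$ one extends $\sigma `` \bar{G}$ to a lower bound $p \leq p_0$ with $p(\alpha) = 0$. The extra clause $\sigma' \hook \bar{\nu} = \sigma \hook \bar{\nu}$ demanded by the ``above $\nu$'' strengthening comes for free because both $\bar{G}$ and $\bar{K}$ are coded by subsets of $\bar{\nu}$, mirroring the mechanism already exploited in Theorem \ref{meta}; and the ``$S_G$ contains no club in $V[G][H]$'' subclaim pulls back by elementarity exactly as before, since any hypothetical club $\bar{C} \subseteq \bar{S}_G$ in $\bar{N}[\bar{G}][\bar{H}]$ would push forward to a club whose trace on $\alpha$ is cofinal in $\alpha \notin S_G$. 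The main obstacle I anticipate is no new conceptual difficulty but rather the careful propagation of the ``above $\nu$'' clause through each factor of the three-step construction, which as indicated ultimately comes for free from the coding of the generics.
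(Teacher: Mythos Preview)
Your approach matches the paper's, which literally consists of the single sentence ``by the same proof'' preceding the theorem statement: force with $\mathbb{NR}_\nu$ over a model of $\MM$, apply Cox's Theorem \ref{Seansthm} with $\dot{\R}$ the club-shooting through $\nu \setminus S_G$, and verify that the three-step is $\infty$-subproper above $\nu$ exactly as in Theorem \ref{notMM}.

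There is, however, one concrete error in your verification of clause~(2) of Theorem \ref{Seansthm}: you assert that $j `` G = G$. This is correct in Theorem \ref{notMM} only because there $\P = \mathbb{NR}_{\omega_2}$ is coded inside $\omega_2^V = \mathrm{crit}(j)$. For $\nu > \omega_2^V$ the conditions in $\mathbb{NR}_\nu$ have domains unbounded in $\nu$, so $j$ genuinely moves them and $j `` G \neq G$. The lower bound on $j `` G$ in $j(\P)$ must instead be constructed along the lines of Theorem \ref{addabiggersquare}: set $p^* := \bigcup_{p \in G} j(p)$, a function on the countably $N$-cofinal ordinals below $\beta^* := \sup j `` \nu$, and check that $p^*$ is a condition in $j(\mathbb{NR}_\nu)$. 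For $\gamma < \beta^*$ of uncountable $N$-cofinality this is handled by any single $j(p)$ with $j(\alpha_p) > \gamma$; the only nontrivial point is $\beta^*$ itself when $\cof^N(\beta^*) = \omega_1$, where one must exhibit a club in $\beta^*$ disjoint from $(p^*)^{-1}(1)$, and it is here that the generic club $\bigcup K$ is transported along $j$. So the gap is real but local, and the fix is already on display in the paper in the analogous $\square_\mu$ situation. The remainder of your outline---the failure of $\infty$-$\SubPFA \hook \mu$ via Theorem \ref{failureofsquare} and the $\infty$-subproperness above $\nu$ of the three-step---is correct and does transport directly.
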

The proof of this Theorem finishes the proof of all nonimplications involved in Main Theorem \ref{mainthm1}.

\section{Conclusion and Open Questions}
We view this paper, alongside its predecessor \cite{FS2020} as showing, amongst other things, that the continuum forms an interesting dividing line for subversion forcing: below the continuum the ``sub" plays no role as witnessed by the fact that the same non-implications can hold as those that hold for the non-sub versions. Above, it adds considerable strength to the associated forcing axioms. However, as of now we only know how to produce models of $\SCFA$ in which the continuum is either $\aleph_1$ or $\aleph_2$. The most pressing question in this area is therefore whether consistently $\SCFA$ can co-exist with a larger continuum. 
\begin{question}
Is $\SCFA$ consistent with the continuum $\aleph_3$ or greater?
\end{question}

We note here that the most obvious attempt to address this question i.e. starting with a model of $\SCFA$ and adding $\aleph_3$-many reals with e.g. ccc forcing, does not work, an observation due to the first author.

\begin{lemma}
    Suppose $\P$ is a proper forcing notion adding a real. Then $\SCFA$ fails in $V^\P$.
\end{lemma}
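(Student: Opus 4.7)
The plan is as follows. I would argue by contradiction: assume $\P$ is proper, adds a real, and $V^\P \models \SCFA$. Properness gives $\omega_1^V = \omega_1^{V^\P}$, and fixes a real $c \in \R^{V^\P} \setminus \R^V$. Properness also yields the covering property: every countable set of ordinals in $V^\P$ lies inside a countable set of ordinals in $V$. The key background fact is Lemma~\ref{newseqs}(2), which ensures that no subcomplete forcing over $V^\P$ can add a new countable sequence of ordinals below the continuum, and in particular no new reals; so $\SCFA$ in $V^\P$ is a forcing axiom for a class whose members are quite restricted from an ``adding new $\omega$-sequences'' point of view.

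The substantive step would be to exhibit, inside $V^\P$, a subcomplete forcing $\mathbb Q$ together with an $\omega_1$-indexed family $(D_\alpha : \alpha < \omega_1)$ of dense subsets such that any filter meeting every $D_\alpha$ would decode the new real $c$ as an element of $V$, contradicting $c \notin V$. One natural target takes $\mathbb Q$ as a $\sigma$-closed forcing built from the $V$-structure of $\P$---for instance, a collapse-style forcing whose conditions enumerate maximal antichains of $\P$ from $V$---with dense sets chosen so that the meeting filter simulates an auxiliary $\P$-generic over $V$ whose interpretation of the name for the new real places a copy of $c$ back inside $V$. The covering property of proper forcing is precisely what is needed to bridge between the countable information available in $V$ and the new $\omega$-sequence $c$ living in $V^\P$.

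The principal obstacle is that for $\sigma$-closed forcings the forcing axiom at $\omega_1$-many dense sets is already a theorem of $\ZFC$ (one builds the filter by transfinite induction, taking lower bounds at countable-cofinality limits), so no contradiction can come from a purely $\sigma$-closed $\mathbb Q$. Any genuine use of $\SCFA$ must therefore invoke a properly subcomplete, non-$\sigma$-closed forcing in $V^\P$; candidates include a Namba-like forcing (when the cardinal arithmetic of $V^\P$ makes Namba subcomplete, cf.\ Lemma~\ref{Nambalemma}) or one of the square-killing or stationary-reflection forcings underlying Theorem~\ref{failureofsquare}. Verifying that the chosen $\mathbb Q$ is subcomplete in $V^\P$, and that meeting $\omega_1$-many well-chosen dense subsets genuinely forces $c$ into $V$, is the technical crux of the argument. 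I expect the hard part to lie exactly here: isolating a subcomplete forcing in $V^\P$ whose $\SCFA$-filter is expressive enough to recover $c$ from $V$-parameters, without inadvertently violating Lemma~\ref{newseqs}(2) elsewhere.
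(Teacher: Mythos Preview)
Your proposal has the right architecture---derive a contradiction by showing the new real is recoverable from $V$-parameters---and you correctly observe that a purely $\sigma$-closed $\mathbb Q$ cannot do the job. But there is a genuine gap: you never identify a working mechanism, and the candidates you float (a collapse-style forcing enumerating antichains of $\P$, or Namba) are not on the right track. You also lean on the covering property of proper forcing, whereas the property actually needed is \emph{preservation of stationary subsets of ${\rm Cof}(\omega)$ points}.

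The paper's argument supplies the missing idea via a known consequence of $\SCFA$ (Jensen, \cite[Lemma 7.1, Section 4]{Jen14}, valid for regular $\nu > 2^{\aleph_0}$). In $V$ fix a stationary partition $\langle A_k : k < \omega\rangle$ of $\nu \cap {\rm Cof}(\omega)$ and a stationary partition $\langle D_i : i < \omega\rangle$ of $\omega_1$. In $V[G]$ let $r \subseteq \omega$ be new, with increasing enumeration $\{k(i)\}_{i<\omega}$. By the cited lemma, $\SCFA$ in $V[G]$ yields an increasing continuous $f:\omega_1 \to \nu$ with $f[D_i] \subseteq A_{k(i)}$ for all $i$. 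Setting $\alpha = \sup(\mathrm{ran}(f))$, one has
\[
r = \{\, k < \omega : A_k \cap \alpha \text{ is stationary in } \alpha\,\}.
\]
Because $\P$ is proper, stationarity of subsets of ${\rm Cof}(\omega)$ points is absolute between $V$ and $V[G]$, so the right-hand side lies in $V$---contradiction. The subcomplete forcing lurking behind this is the one from \cite[Lemma 6.3, Section 3.3]{Jen14} that shoots such an $f$; the point is that you do not need to handle it directly, only to quote its $\SCFA$-level consequence.
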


All that is needed about ``properness" here is that being proper implies that stationary subsets of $\kappa \cap {\rm Cof}(\omega)$ are preserved. The proof of this is standard and generalizes the proof of Lemma \ref{newseqs} above (swapping subproper for proper and removing the bound by the continuum).

\begin{proof}
Assume $\P$ is proper. Let $G$ be a $\P$-generic filter over $V$. For a contradiction, assume $\SCFA$ holds in $V[G]$.

Take a regular cardinal $\nu > 2^\omega$ in $V[G]$. In $V$, take stationary partitions $\langle A_k : k < \omega \rangle$ of $\nu \cap {\rm Cof}(\omega)$ and $\langle D_i : i < \omega \rangle$ of $\omega_1$. In $V[G]$, take a subset $r$ of $\omega$ which is not in $V$. Let $\{k(i)\}_{i < \omega}$ be the increasing enumeration of $r$.

By \cite[Lemma 7.1 of Section 4]{Jen14}\footnote{Though note, as we have cautioned throughout this text, that this lemma requires the additional assumption of $\nu > 2^\omega$, which we are assuming. See \cite[Lemma 3.29]{FuchsERR} for an error free statement.} in $V[G]$, there is an increasing continuous function $f : \omega_1 \to \nu$ such that $f[ D_i ] \subseteq A_{k(i)}$ for all $i < \omega$. Let $\alpha := {\rm sup}({\rm range}(f))$. Then, in $V[G]$ , we have that $r = \{ k \in \omega : A_k \cap \alpha$ is stationary in $\alpha \}$.

But the set $\{ k \in \omega : A_k \cap \alpha$ is stationary in $\alpha \}$ is absolute between $V$ and $V[G]$ since $\P$ is proper and hence preserves stationary subsets of ${\rm Cof}( \omega )$ points. But then $r$ is in $V$, which is a contradiction. 
\end{proof}
This shows that either $\SCFA$ implies the continuum is at most $\aleph_2$ - though given the results of this paper this seems difficult to prove by methods currently available - or else new techniques for obtaining $2^{\aleph_0} \geq \aleph_3$ are needed, which is well known to be in general an open and difficult area on the frontiers of set theory.

\bibliographystyle{plain}
\bibliography{SSFABIB}

\end{document}